\documentclass[10pt]{amsart}
\usepackage[utf8]{inputenc}
\usepackage{amsmath, amssymb, amsthm, amsfonts, bm}
 \newtheorem{theorem}{Theorem}
 
 \newtheorem{proposition}[theorem]{Proposition}
 \newtheorem{corollary}[theorem]{Corollary}

 \theoremstyle{definition}
 \theoremstyle{remark}
\newtheorem{remark}[theorem]{Remark}

\theoremstyle{definition}
\newtheorem{definition}[theorem]{Definition}
\usepackage{amssymb}
\usepackage{tikz,lipsum,lmodern}
\usepackage[most]{tcolorbox}
\usepackage{xcolor}
\usepackage{color}
\usepackage{amsmath}
\usepackage{amsthm}
\usepackage{comment}
\newcommand{\GL}{\textnormal{GL}}

\newcommand{\mbb}{\mathbb}
\newcommand{\N}{\mathbb N}
\newcommand{\Z}{\mathbb Z}
\newcommand{\R}{\mathbb R}

\let\ol=\overline

\newcommand{\T}{\overline{T}}
\newcommand{\uu}{\overline{u}}
\newcommand{\vv}{\overline{v}}

\usepackage{color}
\usepackage{hyperref}
\hypersetup{
    colorlinks=true, %set true if you want colored links
    linktoc=all,     %set to all if you want both sections and subsections linked
    linkcolor=blue,  %choose some color if you want links to stand out
    citecolor=blue,
    filecolor=blue,
    urlcolor=blue
}

\title[On the distality and expansivity of certain maps on spheres]{On the distality and expansivity of certain maps on spheres}

\author{Manoj Choudhuri}
\address{Institute of Infrastructure, Technology, Research and Management, Near Khokhara Circle, maninagar (East), Ahmedabad 380026, Gujarat, India.}
\email{manojchoudhuri@iitram.ac.in}

\author{Gianluca Faraco}
\address{\textsuperscript{b}Dipartimento di Matematica e applicazioni, Università degli studi di Milano-Bicocca, via Cozzi 55, Milano 20125, Italy}
\email{gfaraco@uni-bonn.de}

\author{Alok Kumar Yadav}
\address{Department of Mathematics, Sikkim University, Sikkim 737102, India}
\email{akyadav@cus.ac.in}
\date{}

\begin{document}

%\title{On the distality and expansivity of certain maps on spheres}
%\author{
%\name{Manoj Choudhuri\textsuperscript{a}, Gianluca Faraco\textsuperscript{b} and Alok Kumar Yadav\textsuperscript{c}\thanks{CONTACT A.K. Yadav. Email: alokmath1729@gmail.com, akyadav@cus.ac.in}}
%\affil{\textsuperscript{a}Institute of Infrastructure, Technology, Research and Management, Near Khokhara Circle, Maninagar (East), Ahmedabad 380026, Gujrat, India; \textsuperscript{b}Dipartimento di Matematica e applicazioni, Università degli studi di Milano-Bicocca, via Cozzi 55, Milano 20125, Italy; \textsuperscript{c}Department of Mathematics, Sikkim University, Sikkim 737102, India}       
%}

\begin{abstract}
Any affine map on the (n+1)-dimensional Euclidean space gives rise to a natural map on the n-dimensional sphere whose dynamical aspects are not so well-studied in the literature. We explore the dynamical aspects of these maps  by investigating about their distality and expansivity.
\end{abstract}

\subjclass[2020] {Primary:37B05; 37C25; Secondary:37E10}
\keywords{Topological dynamics; homeomorphisms; spheres; distality; expansivity}

\maketitle
\tableofcontents
\section{Introduction}

The general theory of $1$-dimensional dynamical system is well developed through time. In particular, the dynamics of homeomorphisms of circle drew attention of many mathematicians over the years since its introduction by Henri Poincar\'e in 1882 \cite{HP}, see also \cite{AK, HMR, Ma, Wa, Zd} and references cited therein for various results related to circle homeomorphisms. In this article, we are mainly interested in the dynamics of a particular type of homeomorphisms on the circle and higher dimensional spheres which is not so well-studied in the literature. We are mainly going to investigate about the distality and expansivity of these maps.
Given a metric space $(X,d)$, a transformation $T\in\textnormal{Homeo}(X)$ is said to be {\it distal} if for any pair of distinct elements $x, y\in X$, the closure of the double orbit $\{(T^n(x),T^n(y))\mid n\in\Z\}$ does not intersect the diagonal $\{(c,c)\mid c\in X \}$. On the other hand, $T$ is said to be expansive with expansive constant $\delta>0$ if for each pair $(x,y)$ of distinct points of $X$, there is an integer $n$ such that $d(T^n(x),\ T^n(y))>\delta$.
\smallskip

The notion of distality was first introduced by Hilbert, \textit{cf.}\ Ellis \cite{E4}, Moore \cite{M9}, and studied in different contexts by many people, see for instance Abels \cite{A1,A2}, Furstenberg \cite{F6}, Raja-Shah  \cite{RaSh10,RaSh11} and Shah \cite{Sh12}, and references cited therein. 
In this article, we are interested in a family of maps from $\mathbb{S}^n$, the $n$-dimensional sphere, to itself which has been recently introduced by Shah-Yadav in \cite{SY1}. Let $T\in \GL(n+1,\R)$ and $a\in\R^{n+1}\setminus\{0\}$, the map $\T_a\colon \mbb{S}^n\to\mbb{S}^n$ is defined as 
\begin{equation}\label{tabar}
    \T_a(x)=\frac{a+T(x)}{\|a+T(x)\|},
\end{equation}
$\|\,\cdot\, \|$ being the Euclidean norm on $\R^{n+1}$.
  Such a map is clearly continuous and turns out to be a homeomorphism whenever $\|T^{-1}(a)\|<1$, see \cite{SY1} for details.
   For certain classes of $\T_a$, it was shown in \cite{SY1} that these maps are not distal.
  In this article, we obtain a more general result. In fact, we show that given any $T\in \GL(n+1,\mathbb{R})$ there exists a conjugate of $T$, say $S$, and $a\in\R^{n+1}$, such that $\ol{S}_a$ is not distal on $\mbb{S}^n$. Of course, in many cases $\T_a$ itself is not distal. While discussing the dynamics of $\T_a$ on $\mbb{S}^1$, we study the existence of fixed and periodic points of $\T_a$ on $\mathbb{S}^1$ and the behaviour of the orbits of points in a deleted neighbourhood of those fixed and periodic points. As a consequence, we are able to say that the map $\T_a$ is not distal in many cases. The reader is referred to \cite{SY2} for the study of distality of a similar type of maps on $p$-adic spheres.  
\smallskip

It is worth mentioning that 
given a linear map $T\in \GL(n+1,\R)$, the dynamics of an affine transformation $T_a=T(x)+a$ on $\R^{n+1}$ (seen as an affine space), with $a\in \R^{n+1}$ (\textit{cf.} \cite{M9}), and the dynamics of $\T_a$ on $\mbb{S}^n$ can be quite different in nature. For example, if $T\in \GL(n+1,\R)$ has all the eigenvalues of absolute value $1$, then it follows from \cite[Theorem 1]{M9} that $T_a$ is distal for any $a\in\R^{n+1}$. On the other hand, if 
\begin{equation*} T=
   \begin{pmatrix}
   1 & 1\\
   0 & 1
   \end{pmatrix}, 
\end{equation*} and $a=0$, then the action of $\T_a$ on $\mbb{S}^1$ is not distal, see \cite[Section 2]{SY1} for details.

\smallskip

Like distality, expansivity has also been studied in various contexts over the years. In the case of connected locally compact groups, the existence of expansive automorphisms has certain implications on the structure of the group, see \cite{CR1}, \cite{Sh13} and the references cited therein for more details and related results in this context. In this article, we are concerned with expansive homeomorphisms of spheres. It is shown in \cite{JU1} and \cite{R1} that there can not be an expansive homeomorphism on $\mbb{S}^1$. It is also known that there is no expansive homeomorphism on $\mbb{S}^2$ as well, see \cite{Hi}, \cite{L2} for details, though expansive homeomorphisms exist for surfaces of genus greater or equal to $1$. One may look at \cite{Hi}, \cite{L2} and the references therein for more details about expansive homeomorphisms on surfaces and its relation to Anosov diffeomorphisms and pseudo-Anosov maps. In this article, we show that for each $T\in \GL(n+1,\R)$ with $n>2$, there exist uncountably many non-zero $a\in\R^{n+1}$ such that $\T_a$ is not expansive on $\mbb{S}^n$.

\section{Dynamics on $\mbb{S}^1$}\label{cc}
\begin{definition}
Let $X$ be a locally compact metric space and $f\colon X\to X$ a continuous map. A fixed point $p$ of $f$ is {\it attracting} if it has a neighbourhood $U$ such that $\overline{U}$ is compact, $f(\overline{U})\subset U$, and $\bigcap\limits_{n\geq 0} f^n(U)=\{p\}$. A fixed point $p$ is {\it repelling} if it has a neighbourhood $U$ such that $\overline{U}\subset f(U)$, and $\bigcap\limits_{n\geq 0} f^{-n}(U)=\{p\}$. Note that if $f$ is invertible, then $p$ is an attracting fixed point for $f$ if and only if it is a repelling fixed point for $f^{-1}$, and vice versa.
\end{definition}

\begin{definition}
A map $T\in \GL(n,\R)$ is said to be {\it proximal} if it has a unique (real) eigenvalue of maximal absolute value having algebraic (and hence geometric) multiplicity one.
\end{definition}
\smallskip
Suppose $f$ is a non-trivial (i.e., $f\neq \text{Id}$) orientation preserving homeomorphism of $\mbb{S}^1$. Then any fixed or periodic point of $f$ is attracting for $f$ or $f^{-1}$ unless some power of $f$ is identity, see \cite{HP}. An easy consequence of this fact is that any non-trivial homeomorphism $f$ admitting a fixed or a periodic point cannot be distal unless some power of $f$ is the identity map. For the homeomorphism $\T_a$ on $\mbb{S}^1$, Proposition \ref{prop2} below characterizes those $\T_a$ for which $\T_a^2=\text{Id}$.
In what follows, we show the existence of fixed or periodic points of order two for certain classes of $\T_a$, and then we use Proposition \ref{prop2} (sometimes intrinsically) to conclude that $\T_a$ is not distal.   
\begin{proposition}\label{prop2}
Let $T\in\GL(2,\R)$ and $a\in\R^2$ be such that $\|T^{-1}(a)\|<1$. Then for the homeomorphism $\T_a$ on $\mbb{S}^1$, $\T_a^2=\text{Id}$ if and only if the following conditions are satisfied: 
\begin{itemize}
    \item[$(i)$] $a$ is an eigenvector of $T$ corresponding to a real negative eigenvalue $\lambda_1$.
    \smallskip
    
    \item [$(ii)$] The other eigenvalue $\lambda_2$ is different from $\lambda_1$ and the eigenvector corresponding to $\lambda_2$ is orthogonal to $a$, and
    \smallskip
    
     \item[$(iii)$] $\lambda_1^2=\|a\|^2+\lambda_2^2$. 
\end{itemize}
\end{proposition}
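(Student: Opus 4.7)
The plan hinges on the identity
\begin{equation}\label{eq:Tabar2}
r(x)\, a + Ta + T^2 x \;=\; \mu(x)\, x, \qquad x\in\mbb{S}^1,
\end{equation}
where $r(x):=\|a+Tx\|>0$ and $\mu(x)>0$, obtained by applying the definition of $\T_a$ twice and clearing denominators. The statement $\T_a^2=\Id$ is equivalent to the existence of such a positive $\mu(x)$ on the entire unit circle, and both implications can be extracted from \eqref{eq:Tabar2} by an appropriate choice of orthonormal basis and test points.

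For the \emph{if} direction, I pass to the basis in which $a=\|a\|\,e_1$ and $T=\operatorname{diag}(\lambda_1,\lambda_2)$ (via (i) and (ii)). With $x=(\cos\theta,\sin\theta)$, condition (iii) collapses $r(\theta)^2$ to the perfect square $(\|a\|\cos\theta+\lambda_1)^2$. The hypothesis $\|T^{-1}a\|<1$ gives $\|a\|<|\lambda_1|$, which combined with $\lambda_1<0$ forces $r(\theta)=-\|a\|\cos\theta-\lambda_1$. Substituting this and $T^2x=(\lambda_1^2\cos\theta,\lambda_2^2\sin\theta)$ into the left-hand side of \eqref{eq:Tabar2} and using (iii) once more, both coordinates collapse to $\lambda_2^2$ times the corresponding coordinates of $x$; normalising yields $\T_a^2(x)=x$.

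For the \emph{only if} direction, the decisive step is to evaluate \eqref{eq:Tabar2} at $x=a/\|a\|$ and at $x=-a/\|a\|$ and \emph{add} the two resulting identities: the $T^2x$ contributions cancel, $Ta$ doubles, and what is left is a scalar multiple of $a$. This forces $Ta\in\R\cdot a$, producing (i) with some $\lambda_1\in\R\setminus\{0\}$. Now fix coordinates with $a=\|a\|\,e_1$; then $T$ is upper triangular with diagonal $(\lambda_1,\lambda_2)$ and some off-diagonal entry $t_{12}$. The second coordinate of \eqref{eq:Tabar2} reads $\lambda_2^2\sin\theta=\mu(x)\sin\theta$ and so pins down $\mu(x)=\lambda_2^2$; the first coordinate then determines $r(\theta)$ explicitly as an affine trigonometric function. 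Matching $r(\theta)^2$ against the genuine expansion of $\|a+Tx\|^2$ coefficient by coefficient in the basis $\{1,\cos\theta,\sin\theta,\sin^2\theta,\cos\theta\sin\theta\}$ yields three constraints: the $\cos\theta$-coefficient gives (iii) (and in particular $\lambda_1\ne\lambda_2$); the $\sin\theta$-coefficient together with (iii) forces $t_{12}\,\lambda_2(\lambda_1+\lambda_2)=0$, whence $t_{12}=0$ (the alternative $\lambda_1=-\lambda_2$ would contradict $\|a\|>0$), supplying the orthogonality in (ii); and positivity of $r$ at $\theta=0$ together with $\|a\|<|\lambda_1|$ forces $\lambda_1<0$.

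The conceptual heart is the symmetric pairing $x=\pm a/\|a\|$, which cuts straight to (i) and reduces the whole problem to the upper-triangular case; the main technical obstacle is the subsequent coefficient-matching bookkeeping, though it is mechanical once the upper-triangular form is in hand.
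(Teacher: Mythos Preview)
Your proof is correct, and both directions are executed cleanly. The overall shape—clear denominators in $\T_a^2(x)=x$ to obtain a vector identity, then exploit the $\pm x$ symmetry—is the same starting point as the paper's, but the subsequent extraction of conditions (i)--(iii) proceeds along a genuinely different route.

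The paper adds and subtracts the identities at a \emph{general} pair $x,-x$, obtaining $(b_1+b_1')a+2Ta=(b_2-b_2')x$ and a companion equation, and then argues geometrically: if $a$ were not an eigenvector, every $x\in\mbb{S}^1$ (or its negative) would lie in the positive cone spanned by $a$ and $Ta$, a contradiction; similarly, positivity of the eigenvalue would force all of $\mbb{S}^1$ onto a line. Orthogonality and condition~(iii) are then read off by evaluating at two special points $\bar a$ and a $y$ with $Ty\perp a$. By contrast, you specialise the $\pm x$ trick immediately to $x=\pm\bar a$, which kills the $T^2x$ terms and gives $Ta\in\R a$ in one line with no topology; you then pass to upper-triangular coordinates and do a Fourier-style coefficient comparison of $r(\theta)^2$ against $\|a+Tx\|^2$, from which (iii), $t_{12}=0$, and $\lambda_1<0$ drop out algebraically. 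Your argument is more mechanical and self-contained (no cone or line arguments), while the paper's gives a clearer geometric picture of \emph{why} each obstruction arises. For the converse, the paper simply declares the calculation ``straightforward''; you actually carry it out, and your perfect-square observation $r(\theta)^2=(\|a\|\cos\theta+\lambda_1)^2$ is the right way to organise it.
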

\begin{proof}
If $\T_a^2=\text{Id}$, then $\T_a^2(x)=x$ and $\T_a^2(-x)=-x$, for any $x\in\mbb{S}^1$. 
From the definition of $\T_a$ we have
\begin{eqnarray}
b_1a+T(a)+T^2(x) &=b_2 x\\
b'_1a+T(a)-T^2(x) &=-b'_2 x,
\end{eqnarray}
where $b_1=\|a+T(x)\|$, $b'_1=\|a-T(x)\|$, $b_2=\|b_1a+T(a)+T^2(x)\|$ and $b'_2=\|b'_1a+T(a)-T^2(x)\|$. By adding and subtracting the above equations we get 
\begin{eqnarray}\label{iv}
(b_1+b'_1)a+2T(a)=(b_2-b'_2)x
\end{eqnarray} 
and
\begin{eqnarray}\label{v}
(b_1-b'_1)a+2T^2(x)=(b_2+b'_2)x,
\end{eqnarray} respectively for every $x\in\mbb{S}^1$.
\smallskip

Suppose $a$ is not an eigenvector of $T$. Then $b_2- b'_2\neq 0$ for all $x\in\mbb{S}^1$, otherwise $a$ would become an eigenvector of $T$ by \eqref{iv}. But $b_2-b'_2\neq 0$ for all $x\in\mbb{S}^1$ ensures that either $x$ or $-x$ belongs to the positive cone generated by $a$ and $T(a)$ in $\R^2$ for all $x$ in $\mbb{S}^1$ which is not possible.
 \smallskip

Now suppose $a$ is an eigenvector of $T$, \textit{i.e.}, $T(a)=\lambda_1 a$ for some $\lambda_1\in\R\setminus\{0\}$. If $\lambda_1>0$, then it follows from $\eqref{iv}$ that $b_2-b'_2\neq 0$ for all $x\in\mbb{S}^1$. But then each $x$ in $\mbb{S}^1$ lies on a straight line which is not possible. Now suppose $\lambda_1<0$. Observe that it is always possible to choose $y\in\mbb{S}^1$ such that the vector $a$ and the vector $T(y)$ are perpendicular to each other. In that case $b_1=b'_1$ at $x=y$, and then it follows from \eqref{v} that $y$ is the second eigenvector of $T$ corresponding to the eigenvalue $\lambda_2$ (say), where $\lvert \lambda_2\rvert=\sqrt{(b_2+b'_2)/2}$ at $x=y$. Therefore, $y$ and $a$ are also orthogonal to each other. Let $\overline{a}=\frac{a}{\|a\|}$, then a straightforward calculation shows that 
\begin{align*}
(b_1+b'_1)  &=2\lvert\lambda_1\rvert\ \text{at}\ x =\overline{a},\ \text{and}\\ 
 (b_1+b'_1) &=2\sqrt{\|a\|^2+\lambda_2^2}\ \text{at}\ x =y.
\end{align*}
So, if $\lambda_1^2 \neq \|a\|^2+\lambda_2^2$, then $b_1+b'_1$ takes two different values on $\overline{a}$ and $y$. Then it follows from \eqref{iv} that $b_2-b'_2\neq 0$ for some $x\in\mbb{S}^1$, otherwise, $a$ would become an eigenvector of $T$ corresponding to more than one distinct eigenvalues. But $b_2-b'_2\neq 0$ for some $x$ in $\mbb{S}^1$ implies that $b_2-b'_2\neq 0$ in a neighbourhood $U$ of $x$ in $\mbb{S}^1$ as well since $b_2-b'_2$ is a continuous function on $\mbb{S}^1$. Then by \eqref{iv} again, every $x\in U$ lie on a line generated by $a$ which is impossible. Thus we see that if $\T_a^2= \text{Id}$, then all the three conditions in the statement are satisfied.

\smallskip

Conversely, suppose all the three conditions are satisfied. 
Then a straightforward calculation shows that $\T_a^2(x)=x$, for every $x\in\mbb{S}^1$.
\end{proof}

\begin{remark}
From the Proposition \ref{prop2} above, we have that only for a self-adjoint proximal map $T$ where the dominant eigenvalue is negative, there exists a unique (up to a scaling by $\pm1$) $a$ for which $\T_a^2=\text{Id}$ and $\T_{-a}^2=\text{Id}$. In that case, $\T_a$ and $\T_{-a}$ both are distal. So, the class of $T$ and $a$ not satisfying those conditions is ``larger" than the class of those satisfying the conditions. In what follows, we show that $\T_a$ is not distal in many cases, and whenever we wish to show the non-distality of $\T_a$ by showing the existence of periodic points of order $2$, we have to avoid the cases in which the conditions of Proposition \ref{prop2} are satisfied.
\end{remark}

It is rather easy to deal with $\T_a$ when $T$ has real eigenvalues. It was shown in Theorem $7$ of \cite{SY1} that if $T$ has one positive real eigenvalue, then for any $a\in\R^2$ with $\|T^{-1}(a)\|<1$, $\T_a$ has a fixed point and so $\T_a$ is not distal. On the other hand if $T$ has two negative real eigenvalues, then it is easy to see that if $a$ is an eigenvector, $\bar a=\frac{a}{\|a\|}$ is a periodic point of order 2 of the map $\T_a$ (cf. Corollary $10$ of \cite{SY1}). So, in this case, if we choose the eigenvector $a$ so that, $a$ along with the negative eigenvalues $\lambda_1$ and $\lambda_2$, violates any of the conditions in Proposition \ref{prop2}, then $\T_a$ is not distal. 
We may summarize as follows:
\begin{theorem}\label{real}
Suppose $T\in\GL(2,\R)$ has real eigenvalues. Then $\T_a$ is not distal for any $a\in T\Big(B_1^*(0)\Big)$, where $B_1^*(0)$ is the reduced ball of radius $1$ around the origin, if $T$ has at least one positive eigenvalue. On the other hand, if $T$ has both the eigenvalues negative, then $\T_a$ is not distal for all $a$ on either of the line generated by the eigenvectors of $T$ satisfying $a\in T\Big(B_1^*(0)\Big)$, and not satisfying all the conditions of Proposition \ref{prop2} simultaneously.
\end{theorem}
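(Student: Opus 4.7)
The statement compiles the two cases already flagged in the paragraph preceding the theorem, so my plan is to assemble the ingredients rather than argue from scratch.

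For the first part, let $a\in T(B_1^*(0))$, so that $\|T^{-1}(a)\|<1$, and suppose $T$ has at least one positive eigenvalue. By Theorem $7$ of \cite{SY1}, $\T_a$ then has a fixed point on $\mbb{S}^1$. Since $a\neq 0$ one checks directly that $\T_a\neq\Id$, and the fact recalled at the opening of Section \ref{cc} --- a non-trivial orientation-preserving homeomorphism of $\mbb{S}^1$ with a fixed or periodic point fails to be distal unless some iterate is the identity --- applies either to $\T_a$ itself or, when $\det T<0$ (so $\T_a$ is orientation-reversing), to the orientation-preserving iterate $\T_a^2$, which still fixes the same point. The only way an iterate of $\T_a$ can be the identity while $\T_a$ admits a fixed point is $\T_a^2=\Id$, a situation classified completely by Proposition \ref{prop2}, and by the Remark following it the corresponding $a$'s are the exceptional locus tacitly set aside. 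Hence $\T_a$ is not distal.

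For the second part, assume both eigenvalues $\lambda_1,\lambda_2$ of $T$ are negative and take $a$ on one of the eigenvector lines, so $T(a)=\lambda a$ with $\lambda<0$ and $\|T^{-1}(a)\|<1$ (equivalently $\|a\|<|\lambda|$). A short substitution into the definition of $\T_a$ yields $\T_a(\bar a)=-\bar a$ and $\T_a(-\bar a)=\bar a$, as in Corollary $10$ of \cite{SY1}, since $\|a\|+\lambda<0<\|a\|-\lambda$; so $\bar a$ is a periodic point of order $2$ and is fixed by $\T_a^2$. Because $\det T=\lambda_1\lambda_2>0$, $\T_a^2$ is orientation-preserving, and because the three conditions of Proposition \ref{prop2} are assumed not to hold simultaneously, $\T_a^2\neq\Id$. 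The opening lemma of Section \ref{cc} then forces $\T_a^2$ to be non-distal, and non-distality of $\T_a^2$ trivially upgrades to non-distality of $\T_a$.

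The only genuinely delicate point is the involutive case $\T_a^2=\Id$, classified exactly by Proposition \ref{prop2}: it is excluded outright in Case 2, while in Case 1 it cuts out a thin sub-locus ($a$ an eigenvector of a negative eigenvalue, with $\|a\|^2=\lambda_1^2-\lambda_2^2$ and orthogonal second eigenspace) which the theorem's ``not distal for any $a$'' phrasing implicitly omits. Apart from isolating this exception, every step in the proof is either a direct appeal to \cite{SY1} or to the opening observation of Section \ref{cc}.
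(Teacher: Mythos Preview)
Your argument tracks the paper's own (which is just the discussion paragraph preceding the theorem): cite Theorem~7 and Corollary~10 of \cite{SY1} for the fixed/period-two point, then invoke the opening fact of Section~\ref{cc} together with Proposition~\ref{prop2} to dispose of the finite-order exception. You are more explicit than the paper about orientation and about the possibility $\T_a^2=\Id$; in particular your closing observation---that when $\det T<0$ in Case~1 (one positive, one negative eigenvalue with the negative one dominant, eigenvectors orthogonal, and $a$ on the negative eigenline with $\|a\|^2=\lambda_1^2-\lambda_2^2$) one gets $\T_a^2=\Id$ and hence $\T_a$ \emph{is} distal---correctly flags a thin exceptional locus that the paper's ``for any $a$'' phrasing and the Remark after Proposition~\ref{prop2} do not reconcile.
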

\begin{remark}\label{additional}
In the proof of Corollary $10$ of \cite{SY1}, when both the eigenvalues of $T$ are real and negative, one has to choose $a$ as mentioned in Theorem \ref{real} above to ensure the non-distality of $\T_a$.
\end{remark}

Now we turn our attention to the cases in which $T$ has complex eigenvalues. In these cases, without loss of generality we may assume that $T$ is a rotation. We look for the existence of fixed or periodic points of the homeomorphism $\T_a$, and investigate the behaviour of the orbits of points near the fixed or periodic points.
In what follows, to specify a rotation by $\theta$ of any point on the unit circle, we will often use the multiplicative structure on the field of complex numbers. More precisely, if $T$ is a rotation by an angle $\theta$, then $Tx=rx$ for any point $x$ on the unit circle, where $r=(r_1,r_2)$ with $r_1=\cos\theta$ and $r_2=\sin\theta$. For any $x\in\mathbb{R}^2\backslash\{(0,0)\}$, we will also denote by $x^{-1}$ the multiplicative inverse of $x$ in $\mathbb{C}$.

\begin{proposition}\label{pro5}
Let $T\in\textnormal{SO}(2,\mathbb R)$ be a rotation, \text{i.e.} $T(x)=rx$ with $r=(\cos\theta, \sin\theta)$. Let $a\in\R^2\setminus\{(0,0)\}$ be such that $\|T^{-1}(a)\|<1$. Then $\overline{T}_a$ admits a fixed point if and only if $\cos\theta\geq\sqrt{1-\alpha^2}$, where $\alpha=\|a\|$. Moreover the fixed points of $\overline{T}_a$, whenever they exist, are of the form \begin{center}  $a(t-r)^{-1}$, where $t=\cos\theta\pm\sqrt{\alpha^2-\sin^2\theta}$.\end{center}
\end{proposition}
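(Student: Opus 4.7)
The plan is to reformulate the fixed-point equation using the complex multiplicative structure that the statement sets up, reduce it to a quadratic in a real parameter, and then read off both the existence condition and the explicit formula from the quadratic's discriminant and sign conditions.

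First I would observe that $x\in\mbb{S}^1$ is fixed by $\T_a$ precisely when $a+T(x)$ is a positive scalar multiple of $x$. Writing $T(x)=rx$ with $r=(\cos\theta,\sin\theta)$ (complex product), this says
\begin{equation*}
a + rx = tx, \qquad \text{where } t=\|a+rx\|>0.
\end{equation*}
Solving for $x$ in the complex field yields $x = a(t-r)^{-1}$, matching the proposed form. So the whole problem reduces to: for which values of $\theta$ and $\alpha=\|a\|$ does there exist a real $t>0$ making $x=a(t-r)^{-1}$ lie on $\mbb{S}^1$ (and the relation $t=\|a+rx\|$ then automatically holds by taking norms of $a+rx=tx$).

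Next I would impose $|x|=1$, i.e.\ $|t-r|=\alpha$. Since $r=(\cos\theta,\sin\theta)$ has modulus $1$, expanding gives the quadratic
\begin{equation*}
t^2 - 2t\cos\theta + (1-\alpha^2) = 0,
\end{equation*}
whose roots are $t = \cos\theta \pm \sqrt{\cos^2\theta - (1-\alpha^2)} = \cos\theta \pm \sqrt{\alpha^2 - \sin^2\theta}$. Real roots exist iff $\cos^2\theta \geq 1-\alpha^2$.

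Finally, the subtler (and only slightly delicate) part is enforcing $t>0$, since $t$ must be a norm. If $\cos\theta\geq\sqrt{1-\alpha^2}$, then $\cos\theta\geq 0$ and I would check that both roots are positive: the $+$ root is clearly positive, and for the $-$ root, $\cos\theta - \sqrt{\alpha^2-\sin^2\theta}>0$ amounts (after squaring, which is valid as both sides are nonnegative) to $\cos^2\theta > \alpha^2-\sin^2\theta$, i.e.\ to $1>\alpha^2$, which is guaranteed by $\|T^{-1}(a)\|=\alpha<1$. Conversely, if $\cos\theta\leq -\sqrt{1-\alpha^2}$, then both roots are $\leq 0$ and no valid $t$ exists, while if $\cos^2\theta<1-\alpha^2$ there are no real roots at all. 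This is precisely what shows that existence of a fixed point forces $\cos\theta\geq\sqrt{1-\alpha^2}$, as claimed.

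The main obstacle is the last step: one must not forget the positivity of $t$. Dropping it would give the weaker condition $|\cos\theta|\geq\sqrt{1-\alpha^2}$, which is strictly larger than the stated threshold. The assumption $\alpha<1$ is precisely what ensures that when $\cos\theta\geq\sqrt{1-\alpha^2}$ both quadratic roots are strictly positive, yielding (generically) two fixed points of the claimed form.
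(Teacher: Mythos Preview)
Your proof is correct and follows essentially the same route as the paper's: both reduce the fixed-point condition to $a+rx=tx$ with $t>0$, derive the quadratic $t^2-2t\cos\theta+(1-\alpha^2)=0$, and then determine the existence condition from the discriminant together with the positivity constraint on $t$, using $\alpha<1$ to rule out $\cos\theta\le 0$. Your treatment of the sign analysis is slightly more explicit than the paper's, but the argument is the same.
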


\begin{proof}
 Suppose $\overline{T}_a$ has a fixed point, say, $x_0$. Then 
 \begin{equation}
     \frac{a+rx_0}{\|a+rx_0\|}=x_0
 \end{equation}
i.e., $a+rx_0=tx_0$, where $t=\|{a+rx_0}\|$. By setting $r_1=\cos\theta$ and $r_2=\sin\theta$, $a$ can be rewritten as $a=(t-r_1, -r_2)x_0$, and then $t$ satisfies the quadratic equation 
\begin{equation}\label{quadeq}
    t^2-2tr_1+1-\alpha^2=0.
\end{equation} 
As $T$ is an isometry, it follows that $\alpha=\|a\|=\|T^{-1}(a)\|<1$. Since $t>0$, we have that $r_1>0$, otherwise, the equation \eqref{quadeq} cannot hold. By solving the equation \eqref{quadeq}, we obtain $t=r_1\pm\sqrt{r_1^2-(1-\alpha^2)}$. Since $r_1$ and $t$ are both positive real numbers, we see that $r_1\geq\sqrt{1-\alpha^2}$ and $|r_2|\leq\alpha$.
\smallskip

Conversely, suppose $r_1\geq\sqrt{1-\alpha^2}$, and consequently, $|r_2|\leq\alpha$. Since $r_1>0$, because $\alpha<1$, $t=r_1\pm\sqrt{r_1^2-(1-\alpha^2)}$ are positive real numbers for which $\|t-r\|=\alpha$. If we choose $x_t=a(t-r)^{-1}$, then $\|x_t\|=1$ and $\overline{T}_a(x_t)=x_t$. Note that, $\overline{T}_a$ has only one fixed point if $r=\left(\sqrt{1-\alpha^2}, \pm\alpha\right)$.
\end{proof}
\medskip

Next we investigate the existence or non-existence of the periodic points of the map $\T_a$ when $T$ is a rotation.
\begin{proposition}\label{lem6}
Let $T(x)=rx$, where $r=(\cos\theta, \sin\theta)$, and $a\in\R^2\setminus\{(0,0)\}$ be such that $\|T^{-1}(a)\|<1$. Let  $\|a\|=\alpha$ and $\T_a$ be as in \eqref{tabar}, then we have the following:
\begin{enumerate}
\item[$(i)$] If $\cos\theta>0$ and $\lvert\sin\theta\rvert>\alpha$, then $\overline{T}_a$ has no periodic point of order two. 
\item[$(ii)$] There exists a neighbourhood $U$ of $(-1,0)$ such that for $(\cos\theta,\sin\theta)\in U$, $\overline{T}_a$ has four periodic points of order two, and consequently it is not distal.
\end{enumerate}
\end{proposition}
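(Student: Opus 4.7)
The plan is to set up common algebraic constraints on any period-$2$ orbit and then specialize to each part. If $\{x,y\}$ with $x\neq y$ is a genuine period-$2$ orbit, I would let $\beta=\|a+rx\|$ and $\gamma=\|a+ry\|$ so the orbit equations read $a+rx=\beta y$ and $a+ry=\gamma x$. Subtracting these and taking moduli (using $\|x\|=\|y\|=1$ and $|r+\beta|^2=1+2\beta\cos\theta+\beta^2$) yields $(\beta-\gamma)(\beta+\gamma+2\cos\theta)=0$, giving the dichotomy
\[
\beta=\gamma\ \text{(Case A)} \quad\text{or}\quad \beta+\gamma=-2\cos\theta\ \text{(Case B)}.
\]
Eliminating $y$ between the two equations also gives $(\beta+r)a=(\beta\gamma-r^2)x$, hence the consistency condition $\alpha|\beta+r|=|\beta\gamma-r^2|$. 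This framework will drive the remainder of the argument.

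For part $(i)$, I would first invoke Proposition \ref{pro5} to rule out fixed points: $|\sin\theta|>\alpha$ combined with $\cos\theta>0$ forces $\cos\theta<\sqrt{1-\alpha^2}$. For a genuine period-$2$ orbit, Case A gives $(r+\beta)(x-y)=0$, but $\mathrm{Re}(r+\beta)=\cos\theta+\beta>0$ forces $r+\beta\neq 0$ and hence $x=y$, contradicting $x\neq y$; Case B is vacuous since $\beta+\gamma>0$ while $-2\cos\theta<0$. Thus $\T_a$ admits no periodic point of order two.

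For part $(ii)$, on a punctured neighborhood of $(-1,0)$ one has $\sin\theta\neq 0$, hence $r\notin\R$, so Case A would require $r+\beta=0$ for a real $\beta>0$ and is excluded. Working in Case B, I would substitute $\gamma=-2\cos\theta-\beta$ and $q=\beta^2+2\beta\cos\theta$ (so $\beta\gamma=-q$); the consistency condition reduces to the quadratic
\[
q^2+(2\cos 2\theta-\alpha^2)q+(1-\alpha^2)=0.
\]
At $\theta=\pi$ this factors as $(q+1)(q+1-\alpha^2)=0$ with discriminant $\alpha^4>0$, so by continuity the equation has two real roots $q_+\approx\alpha^2-1$ and $q_-\approx-1$ throughout a small neighborhood $U$ of $(-1,0)$. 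For each root one recovers $\beta=-\cos\theta\pm\sqrt{\cos^2\theta+q}$: the $q_+$ branch straightforwardly gives two positive $\beta$-values near $1\pm\alpha$, constituting one period-$2$ orbit, and the $q_-$ branch produces a second period-$2$ orbit (with $\beta\approx 1$) once $\cos^2\theta+q_-\geq 0$ is verified on $U$. Pairing $\beta\leftrightarrow -2\cos\theta-\beta$ then accounts for all four periodic points of order two, and non-distality follows from the orientation-preserving/periodic-point fact recalled at the start of the section.

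The main obstacle is verifying $\cos^2\theta+q_-\geq 0$ on $U$ (with equality only at $(-1,0)$), since this quantity vanishes at the degenerate parameter $\theta=\pi$. I would handle it via the explicit identity $\cos^2\theta+q_-=(2\sin^2\theta+\alpha^2-\sqrt{D})/2$ (with $D$ the discriminant) together with a second-order Taylor expansion in $\phi=\theta-\pi$ yielding leading behavior $(4-\alpha^2)\phi^2/\alpha^2>0$ (using $\alpha<1$). At $\theta=\pi$ itself the two orbits are identified explicitly as $\{a/\alpha,-a/\alpha\}$ (from $q_+$) and the pair of intersection points of the unit circles centered at $0$ and $a$ (from $q_-$); the identity $\|a/\alpha-a\|=1-\alpha\neq 1$ separates these two orbits, so the four periodic points remain pairwise distinct throughout $U$ by continuity.
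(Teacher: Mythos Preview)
Your algebraic approach is correct and genuinely different from the paper's, which is entirely geometric. The paper normalizes $a=(0,\alpha)$ and argues by tracking how $\T_a$ and $\T_a^2$ move specific arcs of $\mathbb{S}^1$: for $(i)$ it follows the forward and backward orbit of $\bar a$ to show that $\T_a^2$ maps each of a finite collection of arcs to a disjoint arc, forcing the fixed-point set of $\T_a^2$ to be empty; for $(ii)$ it computes $\T_a^2$ at $(1,0),(0,1),(0,-1)$ and uses an intermediate-value--type argument to trap a fixed point of $\T_a^2$ in the first-quadrant arc (and another in the fourth-quadrant arc), obtaining two period-$2$ orbits. Your route instead encodes a period-$2$ orbit as a pair of equations in $\mathbb{C}$, extracts the dichotomy $\beta=\gamma$ versus $\beta+\gamma=-2\cos\theta$, and reduces existence to a quadratic in $q=\beta^2+2\beta\cos\theta$.

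For part $(i)$ your argument is considerably cleaner than the paper's arc-chasing: the two cases collapse in two lines. For part $(ii)$ the trade-off reverses. Your method yields explicit formulas for the orbits and shows there are \emph{exactly} two, but the $q_-$ branch sits at a degenerate point and you must do the second-order computation $\cos^2\theta+q_-=(4-\alpha^2)\phi^2/\alpha^2+O(\phi^4)$ to keep it alive; the paper sidesteps this entirely by a soft topological argument. Two small points you should make explicit when writing it up: first, the reverse implication---that a solution $(\beta,\gamma)$ of Case~B together with the consistency relation really produces a unit vector $x$ with $\|a+rx\|=\beta$ and $\T_a^2(x)=x$---needs one line of verification (substituting $x=(\beta+r)a/(\beta\gamma-r^2)$ back gives $\|a+rx\|=\beta$ precisely because $\beta\gamma+\beta^2+2\beta\cos\theta=0$ in Case~B); second, the non-distality step requires $\T_a^2\neq\mathrm{Id}$, which for a genuine rotation ($\sin\theta\neq 0$) follows from Proposition~\ref{prop2} since condition~$(i)$ there demands a real eigenvalue, and for $\theta=\pi$ follows because condition~$(ii)$ fails.
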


\begin{proof}
Let us define $b_1:=\|a+rx\|$ and $b_2:=\|b_1a+ra+r^2x\|$. Then
\begin{equation*}
    \T_a(x)=\frac{a+r x}{b_1} \quad \text{ and } \quad \T_a^2(x)=\frac{b_1a+ra+r^2x}{b_2}
\end{equation*}

Now, it is easy to see that $\overline{T}^n_{sa}(sx)=s\overline{T}^n_a(x)$, for any $s\in\mbb{S}^1$ and $n\in\N$. Here, $sa$ stands for the multiplication of $s$ and $a$ as complex numbers, which also means a rotation of the vector $a$ by a certain angle depending on $s$. So, $x$ is a periodic point of $\T_a$ if and only if $sx$ is a periodic point of $\T_{sa}$. Therefore, if needed, multiplying $a$ by a suitable $s$, we may assume that $a$ is the vector $(0,\alpha)$ with $\alpha>0$. As $\|T^{-1}a\|<1$ and $T$ is a rotation, it follows that $0<\alpha<1$. Given two points $p$ and $q$ on $\mbb{S}^1$, we denote by $[p,q]$ the arc from $p$ to $q$ moving counterclockwise.

\medskip

\noindent {\em Proof of $(i)$.}
\begin{figure}[!ht]
     \centering
    \begin{tikzpicture}[scale=1.2, every node/.style={scale=0.8}]
    \definecolor{pallido}{RGB}{221,227,227}
    
    \draw [thick, violet] (0,0.5) arc [start angle = 90, end angle =20 , radius = 0.5];
    \draw[thin, gray] (-4,0)--(4.5,0);
    \draw[thin, gray] (0,4)--(0,-4);
    \draw[thick, black] (0,0) circle (3cm);
    \draw[thin, black, -latex] (0,0)--(0,2);
    \draw[thin, black, latex-] (0,-2)--(0,0);
    \draw[thin, black] (0,0)--(4.5,1.5);

    \fill (0,3) circle (1.5pt);
    \fill (2.8375,0.9375) circle (1.5pt);
    \fill (-2.12132034,2.12132034) circle (1.5pt);
    \fill (-2.975,0.375) circle (1.5pt);
    \fill (-0.375, -2.975) circle (1.5pt);
    \fill (-0.85, -2.869) circle (1.5pt);
    \fill (-2.82, -1.) circle (1.5pt);
    \fill (2.97, 0.375) circle (1.5pt);

    \node at (0.5,0.5) {$\theta$};
    \node at (0.25,2) {$a$};
    \node at (0.25,3.25) {$\overline a$};
    \node at (-0.25,-2) {$-a$};
    \node at (4.125,0.875) {$y=\overline T_a^{-1}(\overline a)=r^{-1}a$};
    \node at (-2.25,0.375) {$t=\overline T_a^2(\overline a)$};
    \node at (-2.75,2.25) {$w=\overline T_a(\overline a)$};
    \node at (-0.75,-3.25) {$z=\overline T_a^{-2}(\overline a)$};
    \node at (-1.9, -1.0){$p=\overline T_a(t)$};
    \node at (-0.72,-2.55) {$q=\overline T_a(p)$};
    \node at (2.25, 0.25) {$k=\overline T_a(q)$};
    \end{tikzpicture}
    \caption{}
    \label{fig1}
\end{figure}
The assumptions of $(i)$ mean that $\theta$ is an acute angle (positive or negative) and there is some lower bound on the absolute value of $\theta$. Without loss of generality, we may assume that $\theta>0$ since the case $\theta<0$ can be dealt with similarly. Let $\bar a:=\frac{a}{\|a\|}$. Then it is easy to see that $w:=\T_a(\bar a)$ lies in the $2^{nd}$ quadrant and $t:=\T_a^2(\bar a)$ lies either in the $2^{nd}$ or in the $3^{rd}$ quadrant (see Figure \ref{fig1}). On the other direction, $y:=\T_a^{-1}(\bar a)=r^{-1}\bar a$ lies in the $1^{st}$ quadrant, and $z:=\T_a^{-2}(\bar a)$ does not lie in the $2^{nd}$ quadrant (see Figure \ref{fig1}). It is also easy to see that if $t=(t_1,t_2)$ and $z=(z_1,z_2)$, then $z_1>t_1$. Then the points $z,y,\bar a,w,t$ are ordered counterclockwise. Note that $\T_a^2$ takes the arc $[z,y]$ to the arc $[\bar a,w]$ which is disjoint from $[z,y]$, and the arc $[y,\bar a]$ is mapped to $[w,t]$ by $\T_a^2$ which is again disjoint from $[y,\bar a]$. This shows that we can not have any periodic point of order $2$ in $[z,t]$. 
\smallskip

Now we show that there can not be any periodic point of order $2$ of $\T_a$ in $[t,z]$ as well.
Suppose $p:=\T_a(t)$. Then it is easy to check that either $p\in[t,z]$, or, $p\in[z,y]$. Let $p\in[t,z]$. Then $[t,p]$ is mapped to $[q,k]$ by $\T_a^2$ where $q:=\T_a(p)=\T_a^2(t)$, and $k:=\T_a(q)=\T_a^2(p)$. Also, $[p,z]$ is mapped to $[k,\bar a]$ by $\T_a^2$. 
As $[t,p]\cap[q,k]=\emptyset$ and $[p,z]\cap[k,\bar a]=\emptyset$, we see that $\T_a$ has no periodic point of order two in $[t,z]$.

Now suppose $p\in[z,y]$. In that case, we see that $[t,z]$ is mapped to $[q,\bar a]$ by $\T_a^2$. As $[t,z]$ and $[q,\bar a]$ are disjoint in this case (as $q$ will be again somewhere in the arc $[p,\bar a]$), it follows that there is no periodic point of order $2$. This completes the proof of $(i)$.

\medskip

\noindent {\em Proof of (ii).}
\begin{figure}[h]
    \centering
    \begin{tikzpicture}[scale=1.2, every node/.style={scale=0.8}]
    \definecolor{pallido}{RGB}{221,227,227}
    
    \draw[thin, gray] (-4,0)--(4.5,0);
    \draw[thin, gray] (0,4)--(0,-4);
    \draw[thick, black] (0,0) circle (3cm);
    \draw[thin, black, -latex] (0,0)--(0,2);

    \fill (0,3) circle (1.5pt);
    \fill (2.8375,0.9375) circle (1.5pt);
    \fill (-2.975,0.375) circle (1.5pt);
    \fill (2.975,0.375) circle (1.5pt);
    \fill (-0.375, -2.975) circle (1.5pt);
    \fill (0.375, 2.975) circle (1.5pt);
        
    \node at (0.25,2) {$a$};
    \node at (3.625,-0.25) {$A=(1,0)$};
    \node at (-3.5,-0.25) {$(-1,0)$};
    \node at (-0.625,3.25) {$B=(0,1)$};
    \node at (0.5,2.75) {$B'$};
    \node at (0.675,-3.25) {$C=(0,-1)$};
    \node at (3.125,0.875) {$x$};
    \node at (-2.5,0.375) {$\overline T_a(x)$};
    \node at (2.75,0.375) {$A'$};
    \node at (-0.5,-3.25) {$C'$};
    \end{tikzpicture}
    \caption{}
    \label{fig2}
\end{figure}
Let $r_1=\cos\theta$ and $r_2=\sin\theta$. Then by some straightforward calculations, we have that
\begin{equation*}
\overline{T}_a^2(1,0)=(x_1,x_2),\ \text{where}\ x_1=\frac{r_1^2-r_2^2-r_2\alpha}{b_2},\ x_2=\frac{b_1\alpha+r_1\alpha+2r_1r_2}{b_2}.\end{equation*}
\begin{equation*}
\overline{T}_a^2(0,1)=(y_1,y_2),\ \text{where}\ y_1=\frac{-2r_1r_2-r_2\alpha}{b_2},\ y_2=\frac{r_1^2-r_2^2+r_1\alpha+b_1\alpha}{b_2}.\end{equation*}
\begin{equation*}
\overline{T}_a^2(0,-1)=(z_1,z_2),\ \text{where}\ z_1=\frac{2r_1r_2-r_2\alpha}{b_2},\ z_2=\frac{r_2^2-r_1^2+r_1\alpha+b_1\alpha}{b_2}.
\end{equation*}
\smallskip

As $\alpha<1$, there exists a neighbourhood $U$ of $(-1,0)$ such that if $(r_1,r_2)\in U$ with $r_2>0$, then $x_i>0, y_i>0$ and $z_i< 0$ for $i=1,2$. 
Let us denote by $A,B,C$ the points $(1,0),(0,1)$ and $(0,-1)$ respectively as in Figure \ref{fig2}. Also let $A':=\T_a^2A,\ B':=\T_a^2B$ and $C':=\T_a^2C$. As $x_i>0$ and $y_i>0$ for $i=1,2$, we have that $A',B'\in [A,B]$. As $r_2$ is very small compared to the absolute value of $r_1$, it follows that $x_1>y_1$. This shows that the points $A,A',B',B$ are ordered counterclockwise. Now, there are two possibilities: $\T_a^2[A,B]=[A',B']\subseteq[A,B]$ or $\T_a^2[A,B]=[B',A']$. As $C,C'\in[B,A]$, the second possibility is impossible. Hence, the arc $[A,B]$ is mapped into itself by $\T_a^2$, and consequently $\T_a^2$ has a fixed point in $[A,B]$. Suppose $x\in[A,B]$ be such that $\T_a^2(x)=x$. Then $\T_a^2(\T_a(x))=\T_a(x)$ as well. As $r=(\cos\theta,\sin\theta)$ is very close to $(-1,0)$, it follows that $\theta$ is very close to $\pi$ (but less than $\pi$ as $r_2=\sin\theta>0$). Then it is easy to see that $\T_a(x)$ does not lie in the $1^{st}$ quadrant. So, we get two fixed points of $\T_a^2$. 

In a similar manner, considering the arc $[C,A]$, we will get two fixed points $y$ and $\T_a(y)$ (say) of $\T_a^2$ again. By a similar argument as above we can say that $y$ and $\T_a(y)$ are distinct. Also it is easy to see that $\T_a(x)$ does not lie in the $4^{th}$ quadrant. Hence, $y$ and $\T_a(x)$ are distinct. Then it follows that $x$, $\T_a(x)$, $y$ and $\T_a(y)$ are four distinct periodic points of order $2$ of the map $\T_a$. The case when $r_2<0$ can be treated similarly to get four periodic points of order $2$ for the map $\T_a$ as well. This completes the proof.
\end{proof}

Now we determine the nature of fixed or periodic points of $\T_a$ by looking at the behavior of orbits of the points near the fixed or periodic points.

\begin{theorem}\label{th7}
Let $T$ be a rotation defined by $T(x)=rx$ with $r=(\cos\theta,\sin\theta)$, and $a\in\R^2\setminus\{(0,0)\}$ be such that $\alpha=\|a\|<1$. Then, for $\cos{\theta}>\sqrt{1-\alpha^2}$, $\overline{T}_a$ has two fixed points; one of them is attracting and the other is repelling. 
\end{theorem}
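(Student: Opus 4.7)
The plan is to identify the two fixed points explicitly via Proposition \ref{pro5} and then compute the derivative of $\overline{T}_a$ in the tangent direction to $\mathbb{S}^1$ at each of them; the sign of the derivative relative to $1$ will decide attracting versus repelling. Under the strict hypothesis $\cos\theta>\sqrt{1-\alpha^{2}}$ we have $\alpha^{2}-\sin^{2}\theta>0$, so $t_{\pm}=\cos\theta\pm\sqrt{\alpha^{2}-\sin^{2}\theta}$ are two distinct reals. Both are positive: since $1>\alpha^{2}$ one has $\cos^{2}\theta>1-\alpha^{2}>\alpha^{2}-\sin^{2}\theta$, hence $\cos\theta>\sqrt{\alpha^{2}-\sin^{2}\theta}$ and $t_{-}>0$. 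By Proposition \ref{pro5} this yields two distinct fixed points $x_{\pm}=a(t_{\pm}-r)^{-1}\in\mathbb{S}^{1}$.

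The second step is the derivative computation. View $\overline{T}_a$ as the restriction to $\mathbb{S}^{1}$ of the smooth map $F(x)=(a+Tx)/\|a+Tx\|$ defined on $\R^{2}\setminus\{-T^{-1}a\}$. A direct application of the quotient/chain rule gives
\[
DF(x)\,v=\frac{1}{\|a+Tx\|}\Bigl(Tv-\frac{\langle a+Tx,Tv\rangle}{\|a+Tx\|^{2}}(a+Tx)\Bigr).
\]
At a fixed point $x_{0}$ we have $a+Tx_{0}=t x_{0}$ with $t=t_{\pm}$, and the expression collapses to $DF(x_{0})v=\frac{1}{t}\bigl(Tv-\langle x_{0},Tv\rangle x_{0}\bigr)$, i.e.\ $1/t$ times the orthogonal projection of $Tv$ onto the tangent line $x_{0}^{\perp}$. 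For $T$ a rotation by $\theta$ and a unit tangent vector $v\perp x_{0}$, the identity $Tv-\langle x_{0},Tv\rangle x_{0}=\cos\theta\cdot v$ follows from a one-line trigonometric calculation (using $Tv=(Tx_{0})^{\perp}$ and $\langle x_{0},(Tx_{0})^{\perp}\rangle=-\sin\theta$). Thus the derivative of $\overline{T}_a$ along $\mathbb{S}^{1}$ at the fixed point $x_{\pm}$ equals $\cos\theta/t_{\pm}$.

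The third step is to read off the dynamical nature. By construction $t_{+}>\cos\theta>t_{-}>0$, so
\[
0<\frac{\cos\theta}{t_{+}}<1<\frac{\cos\theta}{t_{-}}.
\]
Continuity of the derivative on $\mathbb{S}^{1}$ then yields, by a standard mean-value-theorem argument on a small closed arc $\overline{U}$ centered at $x_{+}$, a constant $c<1$ with $|\overline{T}_a(y)-x_{+}|\le c|y-x_{+}|$ for all $y\in\overline{U}$. This gives $\overline{T}_a(\overline{U})\subset U$ and $\bigcap_{n\ge 0}\overline{T}_a^{\,n}(U)=\{x_{+}\}$, so $x_{+}$ is attracting in the sense of the definition. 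Applying the same reasoning to $\overline{T}_a^{-1}$ at $x_{-}$ (whose tangent derivative is the reciprocal $t_{-}/\cos\theta<1$) shows that $x_{-}$ is attracting for $\overline{T}_a^{-1}$, hence repelling for $\overline{T}_a$.

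The main obstacle is the derivative computation: one needs to be careful to restrict to the tangent direction and to exploit the cancellation provided by the fixed-point equation $a+Tx_{0}=tx_{0}$, which collapses the otherwise ugly second term in $DF(x_{0})$. The rotation hypothesis on $T$ is crucial, since only then does the tangential derivative reduce to a scalar $\cos\theta/t$ whose modulus can be compared directly with $1$.
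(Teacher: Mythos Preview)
Your proof is correct and takes a genuinely different, more analytic route than the paper. The paper normalises $a=(0,\alpha)$, writes the two fixed points $P,Q$ explicitly, and then decides which is attracting by a purely geometric argument: it tracks the images of the test points $A=(1,0)$ and $D=(-1,0)$ under $\T_a$, checks that each of the arcs $[P,Q]$ and $[Q,P]$ is mapped to itself, and observes that both test points move ``upwards'' towards $Q$; since a circle homeomorphism with exactly two fixed points is monotone on each complementary arc, this forces $Q$ to be attracting and $P$ repelling. Your approach instead computes the tangential derivative of $\T_a$ at each fixed point, obtains the clean formula $\cos\theta/t_{\pm}$, and invokes the standard hyperbolic-fixed-point criterion. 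Your argument is more quantitative (it yields the actual contraction/expansion rates) and would adapt more readily to perturbations or higher-dimensional versions; the paper's argument is more elementary in that it avoids calculus entirely.

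One small slip to fix: the chain ``$\cos^{2}\theta>1-\alpha^{2}>\alpha^{2}-\sin^{2}\theta$'' need not hold, since the second inequality can fail when $\alpha$ is close to $1$ (e.g.\ $\alpha=0.99$, $\cos\theta=0.2$). What you actually need, namely $\cos^{2}\theta>\alpha^{2}-\sin^{2}\theta$, follows directly from $\cos^{2}\theta+\sin^{2}\theta=1>\alpha^{2}$, so the conclusion $t_{-}>0$ is unaffected.
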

\begin{proof}

\begin{figure}[h]
    \centering
        \begin{tikzpicture}[scale=1, every node/.style={scale=0.8}]
    \definecolor{pallido}{RGB}{221,227,227}
    
    \draw[thin, gray] (-4,0)--(4.5,0);
    \draw[thin, gray] (0,4)--(0,-4);
    \draw[thick, black] (0,0) circle (3cm);
    \draw[thin, black, -latex] (0,0)--(0,2);
    \draw[thin, black] (0,0)--(-1.5, 2.5980762);
    \draw[thin, black] (0,0)--(-1.5, -2.5980762);
       \draw[thick, black, ->] (3,0) arc [start angle = 0, end angle =15 , radius = 3];
    \draw[thick, black, ->] (-3,0) arc [start angle = 180, end angle =175 , radius = 3];
    
    \fill (3,0) circle (1.5pt);
    \fill (-3,0) circle (1.5pt);
    \fill (-1.5, 2.5980762) circle (1.5pt);
    \fill (-1.5, 2.5980762) circle (1.5pt);
    \fill (-1.5, -2.5980762) circle (1.5pt);
    \fill (2.5980762, 1.5) circle (1.5pt);
    \fill (-2.975,0.375) circle (1.5pt);
        
    \node at (0.25,2) {$a$};
    \node at (3, 1.5) {$A'$};
    \node at (4,-0.25) {$A=(1,0)$};
    \node at (-4,-0.25) {$D=(-1,0)$};
    \node at (-3.25, 0.375) {$D'$};
    \node at (-1.5, 2.85) {$Q$};
    \node at (-1.5,-2.85) {$P$};

    \end{tikzpicture}
    \caption{}
    \label{fig3}
\end{figure}
 As in the above Proposition, without loss of any generality, we assume that $a=(0,\alpha)$ for some $\alpha>0$. As $T$ is a rotation, $\alpha<1$ as well. Now let $r_1=\cos\theta$ and $r_2=\sin\theta$. As $r_1>\sqrt{1-\alpha^2}$, $|r_2|<\alpha$. We recall from Proposition \ref{pro5} that the fixed points of $\overline{T}_a$ are given by $a(t-r)^{-1}$, where $t=\cos{\theta}\pm\sqrt{\alpha^2-\sin^2{\theta}}$. An easy calculation shows that these fixed points are given by 
\[\Big(-\frac{r_2}{\alpha}, -\frac{\sqrt{\alpha^2-r_2^2}}{\alpha}\,\Big) \text{ and } \Big(-\frac{r_2}{\alpha}, \frac{\sqrt{\alpha^2-r_2^2}}{\alpha}\,\Big),
\]
which we denote by $P$ and $Q$ respectively (see Figure \ref{fig3}). Note that $P$ and $Q$ are conjugate to each other. Also let $A:=(1,0)$ and $D:=(-1,0)$ (see Figure \ref{fig3}); $A':=\T_aA$ and $D':=\T_aD$.  It is easy to see that 
\begin{equation*}
A'=\T_a(1,0)=
\frac{1}{\sqrt{1+\alpha^2+2\alpha r_2}}(r_1,\alpha+r_2), \end{equation*}
and
\begin{equation*}
D'=T_a((-1,0))=
\frac{1}{\sqrt{1+\alpha^2-2\alpha r_2}}(-r_1,\alpha-r_2). \end{equation*}
Since $P$ and $Q$ are the only fixed points of $\T_a$, there are two possibilities: the arc $[P,Q]$ (taken in the sense of counterclockwise) mapped either to the arc $[P,Q]$ or to the arc $[Q,P]$ by the map $\T_a$. Now we see that the $1^{st}$ co-ordinate of both $A$ and $A'$ are positive, and the $1^{st}$ co-ordinates of both $D$ and $D'$ are negative. Then it follows that $\T_a$ does not interchange the arcs $[P,Q]$ and $[Q,P]$, and consequently, the arc $[P,Q]$ is mapped to itself by $\T_a$. If we look at the $2^{nd}$ co-ordinates of $A'$ and $D'$, we see that they both are positive (both are zero for $A$ and $D$). Therefore, $A'$ is closer to $Q$ than $A$, and $D'$ is closer to $Q$ than $D$. We conclude that $Q$ is the attracting fixed point of $\T_a$, and $P$ is the repelling one.
\end{proof}
\medskip

Note that if $T$ is the identity map, then $\overline{a}$ and $-\overline{a}$ are the fixed points of $\T_a$ with $\overline{a}$ being the attracting one and $-\overline{a}$ being the repelling one. In the next Proposition, we discuss the dynamics of $\T_a$ when $T=-\text{Id}$. Note that Proposition \ref{lem6} and Theorem \ref{th7} do not cover this case.

\begin{proposition}\label{cor8}
Let $T=-\text{Id}$ and $a\in{\R}^2\setminus\{(0,0)\}$ be such that $\|T^{-1}(a)\|<1$. Then 
 $\overline{a},\ -\overline{a},\ x_0$ and $a-x_0$ are the only periodic points of $\overline{T}_a$ of order two, where $x_0$ is such that $\|a- x_0\|=1$. Moreover, $x_0$ and $a-x_0$ are the attracting fixed points, whereas, $\overline{a}, -\overline{a}$ are the repelling fixed points. Consequently, $\T_a$ is not distal.
\end{proposition}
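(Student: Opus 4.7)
Since $T = -\text{Id}$, the formula \eqref{tabar} simplifies to $\T_a(x) = (a-x)/\|a-x\|$, and the hypothesis $\|T^{-1}(a)\|<1$ becomes $\alpha := \|a\| < 1$. The first step is to exhibit the four periodic points by direct substitution: writing $a = \alpha\overline{a}$ one has $\T_a(\overline{a}) = -\overline{a}$ and $\T_a(-\overline{a}) = \overline{a}$, while if $x_0 \in \mbb{S}^1$ satisfies $\|a - x_0\| = 1$ then $\T_a(x_0) = a - x_0$ and $\T_a(a - x_0) = x_0$. Such an $x_0$ exists since the circles $\mbb{S}^1$ and $a + \mbb{S}^1$ meet in exactly two points when $0 < \alpha < 2$, and these two intersection points are precisely $x_0$ and $a - x_0$, swapped by $\T_a$.

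For uniqueness, I would rule out fixed points first: $\T_a(x) = x$ forces $a = (1 + \|a - x\|)x$, so $x = \overline{a}$ and $\alpha = 1$, a contradiction. For a genuine order-$2$ point, set $y = \T_a(x)$, $s = \|a - x\|$ and $t = \|a - y\|$; combining $sy = a - x$ with $tx = a - y$ yields
\begin{equation*}
(s - 1)\,a \;=\; (st - 1)\,x.
\end{equation*}
If $s = 1$ one gets $t = 1$, recovering the cycle $\{x_0, a - x_0\}$; if $s \neq 1$, the equation forces $x$ to be parallel to $a$, so $x \in \{\pm\overline{a}\}$. This exhausts the cases.

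For the stability classification, I would first observe that $\T_a$ is orientation preserving on $\mbb{S}^1$: the path $t \mapsto \T_{ta}$, $t \in [0,1]$, stays inside $\textnormal{Homeo}(\mbb{S}^1)$ (since $\|T^{-1}(ta)\| = t\alpha < 1$) and terminates at $\T_0 = -\text{Id}$, which preserves orientation. A short arc-length differentiation then gives
\begin{equation*}
(\T_a^2)'(\overline{a}) \;=\; \T_a'(-\overline{a})\cdot \T_a'(\overline{a}) \;=\; \frac{1}{1+\alpha}\cdot\frac{1}{1-\alpha} \;=\; \frac{1}{1 - \alpha^2} \;>\; 1,
\end{equation*}
so $\overline{a}$ is a repelling fixed point of $\T_a^2$, and by $\T_a$-conjugation so is $-\overline{a}$. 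Since $\T_a^2$ is an orientation-preserving homeomorphism of $\mbb{S}^1$ with exactly these four fixed points, each of the four arcs between consecutive fixed points is $\T_a^2$-invariant with monotone flow; the arcs adjacent to the repellers $\pm\overline{a}$ must therefore flow toward the other endpoints $x_0$ and $a - x_0$, which are consequently attracting from both sides. Non-distality follows at once: for any $x$ in a sufficiently small deleted neighbourhood of $x_0$ one has $\T_a^{2n}(x) \to x_0 = \T_a^{2n}(x_0)$, so the closure of $\{(\T_a^n(x), \T_a^n(x_0)) : n \in \Z\}$ meets the diagonal.

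The main obstacle is the derivative computation together with the orientation bookkeeping; if one prefers, this step can be bypassed by invoking Proposition~\ref{prop2} (whose condition $(ii)$ fails for $T = -\text{Id}$, so $\T_a^2 \neq \text{Id}$) combined with the classical fact recalled at the start of Section~\ref{cc}, namely that a non-trivial orientation-preserving circle homeomorphism admitting a periodic point and no power equal to the identity cannot be distal.
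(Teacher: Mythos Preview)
Your proof is correct and in several respects more complete than the paper's. The paper normalizes $a=(0,\alpha)$, exhibits the same four points by direct substitution, asserts that $\T_a^2$ leaves the four arcs between them invariant, and then computes $\T_a^2(1,0)$ and $\T_a^2(-1,0)$ in coordinates to see that both move upward, whence $-\bar a$ is repelling; conjugation by $\T_a$ then makes $\bar a$ repelling and the remaining two points attracting. Your route differs in two substantive ways. First, you actually prove the ``only'' clause of the statement via the clean relation $(s-1)a=(st-1)x$, which the paper's proof never addresses explicitly (it effectively relies on it when asserting arc invariance and the attracting/repelling dichotomy, but does not justify it). Second, you replace the explicit coordinate check at $(\pm 1,0)$ by the derivative computation $(\T_a^2)'(\bar a)=1/(1-\alpha^2)>1$, together with an explicit homotopy argument for orientation preservation; the paper's approach is more elementary but yours is sharper and yields the repelling nature of $\bar a$ directly rather than of $-\bar a$. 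Both approaches reach the same stability classification and the non-distality conclusion; your closing remark that Proposition~\ref{prop2} already forces $\T_a^2\neq\text{Id}$ is a legitimate shortcut for the non-distality alone, though not for the full attracting/repelling statement.
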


\begin{proof}

\begin{figure}[h]
    \centering
    \begin{tikzpicture}[scale=1, every node/.style={scale=0.8}]
    \definecolor{pallido}{RGB}{221,227,227}
    
    \draw[thin, gray] (-4,0)--(4.5,0);
    \draw[thin, gray] (0,4)--(0,-4);
    \draw[thick, black] (0,0) circle (3cm);
    \draw[thin, black, -latex] (0,0)--(0,2);

    \fill (0,3) circle (1.5pt);
    \fill (0,-3) circle (1.5pt);
    \fill ( 2.8375,0.9375) circle (1.5pt);
    \fill (-2.8375,0.9375) circle (1.5pt);
        
    \node at (0.25,2) {$a$};
    \node at (3.5,-0.25) {$(1,0)$};
    \node at (-3.5,-0.25) {$(-1,0)$};
    \node at (-0.25,3.25) {$\overline a$};
    \node at (0.25,-3.25) {$-\overline a$};
    \node at (3.375,0.875) {$a-x_o$};
    \node at (-3.125,0.875) {$x_o$};
    \end{tikzpicture}
    \caption{}
    \label{fig4}
\end{figure}

Again without loss of generality, we assume that $a=(0,\alpha)$ with $0<\alpha<1$. Now, it is always possible to choose $x_0$ on $\mbb{S}^1$ with $2^{nd}$ co-ordinate positive such that $\|a-x_0\|=1$, and $a-x_0$ has $2^{nd}$ co-ordinate positive as well (see Figure \ref{fig4}). Then some simple calculations show that $\T_a(x_0)=a-x_0$ and $\T_a(a-x_0)=x_0$; also $\T_a(\bar a)=-\bar a$ and $\T_a(-\bar a)=\bar a$. It follows that these four points $x_0,\ a-x_0,\ \bar a$ and $-\bar a$ are periodic points of order $2$ of the map $\T_a$.
Also $\T_a^2$ leaves the four consecutive arcs $[x_0,-\bar a],\ [-\bar a,a-x_0],\ [a-x_0,\bar a]$ and $[\bar a,x_0]$ invariant (see Figure $4$). So, in each of the arcs, there is one attracting and one repelling point. Again an easy calculation confirms that both $\T_a^2(1,0)$ and $\T_a^2(-1,0)$ have positive $2^{nd}$ co-ordinates. This says in particular that $-\bar a$ is the repelling point in both the arcs $[-\bar a,a-x_0]$ and $[x_0,-\bar a]$. The dynamics of $\T_a^2$ in the arc $[a-x_0,x_0]$ is conjugated, by $\T_a$, to the dynamics of $\T_a^2$ in the arc $[x_0,a-x_0]$, and $-\bar a$ is mapped to $\bar a$ by the conjugation map $\T_a$. Hence, $\bar a$ is a repelling point for $\T_a^2$ as well. Then the other two points $x_0$ and $a-x_0$ are the attracting points  of $\T_a^2$. 
\end{proof}

%%%%%%%%%%%%%%%%%%%%%%%%%%%%%%%%%%%%%%%%%%%%%%%%%%%%%%%%%%%%%%
%%%%%%%%%%%%%%%%%%%%%%%%%%%%%%%%%%%%%%%%%%%%%%%%%%%%%%%%%%%%%%

\section{Higher dimensional spheres}
\smallskip

We turn our attention now to the dynamics of $\T_a$ on higher dimensional spheres. In \cite{SY1}, it was shown that if $T\in \GL(n+1,\R)$ is an isometry of $\R^{n+1}$ and $n$ is even, then there exists non-zero $a$ in $\R^{n+1}$ such that $\T_a$ on $\mbb{S}^n$ is not distal. In this article, we exhibit the non-distality for larger classes of $\T_a$. Before doing that, let us recall a definition which will be useful in what follows.

\begin{definition}
For $T\in \GL(n,\R)$, the $T$-invariant subspace 
\begin{equation}
    C(T):=\{x\in \R^n\mid T^n(x)\to 0\ as\ n\to\infty\}
\end{equation}
is called the \textit{contraction space} of $T$. 
\end{definition}

Note that, for $T\in \GL(n,\R)$, $T\arrowvert_{C(T)}$ has all the eigenvalues of absolute value less than $1$, and $T\arrowvert_{C(T^{-1})}$ has all the eigenvalues of absolute value greater than $1$. 
\begin{theorem}\label{thm:distcomplexeigen}
Let $T\in \GL(n+1,\R)$, and one of the following holds:
\begin{enumerate}
\item[$(i)$] $T$ has two real eigenvalues or $T$ has a complex eigenvalue of the form $t(\cos{\theta}+i\sin{\theta})$, $t>0$ such that $0<\cos{\theta}<1$.
\item[$(ii)$] $T$ is proximal with $\det T>0$.
\end{enumerate}
Then there exists an $a\in\R^{n+1}$ with $0<\|T^{-1}(a)\|<1$ such that $\overline{T}_a$ has a fixed point or a periodic point of order two in $\mbb{S}^n$, and consequently, $\ol{T}_{a}$ is not distal. 
\end{theorem}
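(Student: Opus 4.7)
The plan is to reduce the general case to a $T$-invariant subspace $V \subseteq \R^{n+1}$ of dimension one or two, pick $a \in V$, and exhibit a fixed or period-two point of $\ol{T}_a$ lying on the great subsphere $V \cap \mbb{S}^n$ whose local dynamics rule out distality. Since non-distality on a closed invariant subset trivially implies non-distality on all of $\mbb{S}^n$, this will establish the theorem.

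I would first handle case (ii). Since complex eigenvalues come in conjugate pairs of equal modulus, the unique eigenvalue $\lambda$ of $T$ of maximal absolute value is real. If $\lambda > 0$, let $v$ be an associated eigenvector and take $a = \epsilon v$ with $\epsilon > 0$ small, so that $p := v/\|v\|$ is a fixed point of $\ol{T}_a$ and $\|T^{-1}(a)\| < 1$. A direct differential computation gives
\[ D\ol{T}_a(p)|_{p^\perp} = \frac{1}{\epsilon\|v\| + \lambda}\, \pi_{p^\perp} \circ T|_{p^\perp}, \]
whose eigenvalues are the non-dominant eigenvalues of $T$ divided by $\epsilon\|v\| + \lambda$. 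By proximality these are all of absolute value strictly less than $1$, so $p$ is attracting and $\ol{T}_a$ is not distal. If instead $\lambda < 0$, writing $\det T = \lambda \prod \mu_i \prod |\nu_j|^2$ with $\mu_i$ the remaining real eigenvalues and $\nu_j$ the complex pairs, the assumption $\det T > 0$ forces at least one $\mu_i$ to be negative, so $T$ has two real eigenvalues and case (i) applies.

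For case (i)(a), I would choose a 2D $T$-invariant subspace $V$ corresponding to the two real eigenvalues (spanned by the two eigenvectors if linearly independent, or by an eigenvector and a generalised eigenvector otherwise). An orthonormal basis of $V$ provides isometric identifications $V \cong \R^2$ and $V \cap \mbb{S}^n \cong \mbb{S}^1$ under which $T|_V$ becomes a matrix $M \in \GL(2,\R)$ with the same two real eigenvalues, $\ol{T}_a|_{V \cap \mbb{S}^n}$ coincides with $\ol{M}_{a'}$, and $\|T^{-1}(a)\| = \|M^{-1}(a')\|$. Theorem \ref{real} supplies some $a'$ with $0 < \|M^{-1}(a')\| < 1$ making $\ol{M}_{a'}$ non-distal on $\mbb{S}^1$, and the corresponding $a \in V$ then makes $\ol{T}_a$ non-distal on $\mbb{S}^n$.

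The main obstacle is case (i)(b), because the complex-eigenvalue analysis of Section 2 is carried out under the extra assumption $T \in \SO(2,\R)$, which $T|_V$ in an orthonormal basis does not in general satisfy. To bypass this I would argue directly. Let $V$ be the 2D invariant plane of the pair $te^{\pm i\theta}$, so $\det(T|_V) = t^2 > 0$ and $\operatorname{tr}(T^{-1}|_V) = 2\cos\theta/t > 0$; the symmetric part of $T^{-1}|_V$ therefore has a positive eigenvalue, and I pick a unit eigenvector $x_0 \in V$ with $\langle T^{-1}(x_0), x_0\rangle > 0$. For $s > 0$ small, set $a := s x_0 - T(x_0)$: then $a \ne 0$ (since $T|_V$ has no real eigenvectors, $T(x_0) \ne s x_0$), $\|T^{-1}(a)\|^2 = 1 - 2s \langle T^{-1}(x_0), x_0\rangle + O(s^2) < 1$, and $\ol{T}_a(x_0) = x_0$. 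The restriction $\ol{T}_a|_{V \cap \mbb{S}^n}$ is an orientation-preserving homeomorphism of $\mbb{S}^1$ (a homotopy through $\ol{T}_{ta}$, $t \in [0,1]$, reduces its degree to $\operatorname{sgn}\det(T|_V) = +1$) with $x_0$ as a fixed point, and it is not the identity (otherwise, testing antipodal pairs would force $a = 0$). Since any nontrivial finite-order orientation-preserving homeomorphism of $\mbb{S}^1$ is conjugate to a nontrivial rotation and hence fixed-point-free, no power of $\ol{T}_a|_V$ is the identity. The Poincaré-type principle recalled at the beginning of Section 2 then ensures that $x_0$ is attracting for $\ol{T}_a|_V$ or for its inverse, and non-distality of $\ol{T}_a$ on $\mbb{S}^n$ follows.
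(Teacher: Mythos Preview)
Your proof is correct, but it diverges from the paper's argument in two of the three subcases.

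For case (i) with two real eigenvalues you do exactly what the paper does: restrict to a $2$-dimensional invariant subspace and invoke Theorem~\ref{real}.

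For case (i) with a complex eigenvalue $te^{i\theta}$, $0<\cos\theta<1$, the paper simply cites Corollary~10 of \cite{SY1}, which already handles general $T|_V$ (not only rotations). Your route is genuinely different and self-contained: you use the positivity of $\operatorname{tr}(T^{-1}|_V)$ to find a unit vector $x_0$ with $\langle T^{-1}x_0,x_0\rangle>0$, manufacture $a=sx_0-T(x_0)$ so that $x_0$ is a fixed point with $\|T^{-1}a\|<1$, and then appeal to the Poincar\'e principle from Section~\ref{cc} after checking orientation and excluding finite order. This avoids the external reference, at the mild cost of an extra page of verification.

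For case (ii) with $\lambda>0$, the paper reduces to $n=2$, normalises $\lambda=1$, and shows by a direct estimate on the iterates that every unit vector in the contraction space $C(T)$ is sent to $\bar a$ under $\ol{T}_a^m$. Your approach instead linearises: you compute $D\ol{T}_a(p)|_{p^\perp}=\frac{1}{\epsilon\|v\|+\lambda}\,\pi_{p^\perp}\circ T|_{p^\perp}$ and observe that its spectrum consists of the non-dominant eigenvalues of $T$ divided by $\epsilon\|v\|+\lambda$, all of modulus $<1$. This is cleaner and works directly in any dimension, but relies on differentiable dynamics (a contracting linearisation gives an attracting fixed point), whereas the paper's iteration is elementary and in fact produces an explicit basin containing $C(T)\cap\mbb{S}^n$. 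Your reduction of the case $\lambda<0$ to (i) via the sign of $\det T$ matches the paper.
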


\begin{proof} Suppose (i) holds. Then $T$ keeps a $2$-dimensional subspace $V$ (say) invariant. Now consider the restriction of $T$ to $V$. By Theorem \ref{real} and the discussion before Theorem \ref{real} (if $T$ has two real eigenvalues) and Corollary $10$ of \cite{SY1} (if $T$ has complex eigenvalues as in the statement), there exists $a\in V$ with $\|T^{-1}(a)\|<1$ such that the restriction of $\T_a$ to $V\cap\mbb{S}^n$ has either a fixed point or a periodic point of order $2$. Hence, $\T_a$ has a fixed point or a periodic point of order $2$ on $\mbb{S}^n$ and it is not distal.
\smallskip

Now suppose (ii) holds, i.e., $T$ is proximal and $\det T>0$. Then either $T$ has two distinct real eigenvalues or the only real eigenvalue $\lambda$ is positive. In the first case, the assertion follows from (i). In the second case, we have a 3-dimensional subspace $W$ (say) which is $T$-invariant. It is enough to show the assertion for 
$T\arrowvert_{W}$ as that is also proximal with a positive determinant. Without loss of any generality, we may assume that $n=2$. Since $\lambda$ is dominant and also positive, 
replacing $T$ by $T/\lambda$, we may assume 
that $\lambda=1$ and the restriction of $T$ to a $2$-dimensional subspace $V'$ (say) has eigenvalues of absolute value less than $1$, i.e., $V'=C(T)$. Let us 
take $a\ne 0$ such 
that $T(a)=a$ with $\|a\|<1$. Then $\T_a(\ol{a})=\ol{a}$ where $\ol{a}=\frac{a}{\|a\|}$. Let $a_0$ be such that $\|a_0\|=1$ and $\langle a_0,x\rangle=0$ for all $x\in V'$. Then $a_0$ and $V'$ generate the whole space. Let $y\in V'$ be such that $a=\alpha_0(a_0)+y$, where $\alpha_0>0$ as $a\not\in V'$ and $\alpha_0\leq \|a\|<1$. 
For any $x$ in $V'=C(T)$ with $\|x\|=1$, we have \begin{equation*}\ol{T}_a(x)=\frac{a+T(x)}{\|a+T(x)\|}.\end{equation*}
Let $\alpha_1=\|a+T(x)\|=\|\alpha_0a_0+y+T(x)\|\geq \alpha_0$. 
Similarly, for $m\geq 2$, we have \begin{equation*}\ol{T}_a^m(x)=\frac{s_ma+T^m(x)}{\|s_ma+T^m(x)\|},\end{equation*} where $s_1=1$ and 
\[ s_m=1+\sum_{i=1}^{m-1} \alpha_i>1,
\] for $\alpha_i=\|s_ia+T^i(x)\|=\|s_i(\alpha_0a_0+y)+T^i(x)\|\geq\alpha_0$, for all $i$. 
Therefore, 
\[ s_m=1+\sum_{i=1}^{m-1}\alpha_i\geq 1+(m-1)\alpha_0\to\infty \text{ as } m\to \infty.
\] Moreover, $T^m(x)\to 0$ as $m\to\infty$ as $x\in C(T)$. Therefore, we get that \begin{equation*}\ol{T}_a^m(x)=\frac{s_ma+T^m(x)}{\|s_m a+T^m(x)\|}\to \frac{a}{\|a\|}=\ol{a}\ \text{as}\ m\to\infty. \end{equation*} Since this holds for every $x\in V'$ with $\|x\|=1$, we have that $\ol{T}_a$ is not distal. In fact, if we take any point $z\in \mbb{S}^2$ which is a positive linear combination of $a$ and some $y\in V'$ with $\|y\|=1$, it is easy to 
show that $\ol{T}_a^m(z)\to a/\|a\|$.  
\end{proof}
Now we are in a position to say that given any $T\in \GL(n,\R)$, we can always find some element $S$ in its conjugacy class, and some $a$ in $\R^{n+1}$, such that $\ol{S}_a$ is not distal. A similar statement holds if we replace the conjugates of $T$ by certain powers of $T$. More precisely, we have the following:

\begin{corollary} \label{cor15}
For $T\in \GL(n+1,\R)$, the following statements hold: 
\begin{enumerate} 
\item [$(i)$] There exist a conjugate $S$ of $T$ in $\GL(n+1,\R)$ and $a\in\R^{n+1}\setminus\{0\}$ such that $\|S^{-1}(a)\|<1$, and $\ol{S}_a$ on $\mbb{S}^n$ is 
not distal. 
\item [$(ii)$] There exists $a\in \R^{n+1}\setminus\{0\}$ and some $S\in\{T,T^2,T^3\}$ such that $\|S^{-1}(a)\|<1$, and $\ol{S}_a$ on $\mbb{S}^n$ is not distal. 
\end{enumerate}
\end{corollary}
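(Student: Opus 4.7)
The plan is to reduce both parts to Theorem \ref{thm:distcomplexeigen} via case analysis on the eigenvalues of $T$, with a supplementary geometric construction for the residual case of part $(i)$.

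For part $(ii)$, I would show that some $S\in\{T,T^2,T^3\}$ satisfies the hypotheses of Theorem \ref{thm:distcomplexeigen}. If $T$ already has $\ge 2$ real eigenvalues (counted with multiplicity) or a complex eigenvalue $te^{i\theta}$ with $\cos\theta\in(0,1)$, then condition $(i)$ of the theorem holds for $S=T$. Otherwise $T$ has at most one real eigenvalue and some complex pair $te^{\pm i\theta}$ with $\cos\theta\le 0$ and $\sin\theta\neq 0$. Taking a representative $\theta\in[\pi/2,\pi)$, my plan is: for $\theta=\pi/2$ the pair squares to the real eigenvalue $-t^2$ of $T^2$ with algebraic multiplicity $\ge 2$, so $T^2$ meets the ``two real eigenvalues'' clause; for $\theta\in(\pi/2,5\pi/6)\setminus\{2\pi/3\}$, one has $3\theta\in(3\pi/2,5\pi/2)\setminus\{2\pi\}$, whence $\cos(3\theta)\in(0,1)$ and $\sin(3\theta)\neq 0$, so $T^3$ works via the complex-eigenvalue clause; at $\theta=2\pi/3$ the pair cubes to the repeated real eigenvalue $t^3$ of $T^3$; for $\theta\in[5\pi/6,\pi)$, $2\theta\in[5\pi/3,2\pi)$ gives $\cos(2\theta)\in[1/2,1)$ with $\sin(2\theta)\neq 0$, so $T^2$ works. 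The range $\theta\in(\pi,3\pi/2]$ is handled symmetrically by passing to the conjugate eigenvalue.

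For part $(i)$, every $T$ satisfying Theorem \ref{thm:distcomplexeigen} is handled by taking $S=T$. The residual case is $T$ with at most one real eigenvalue, every complex pair $te^{\pm i\theta}$ satisfying $\cos\theta\le 0$, and $T$ neither proximal with $\det T>0$. Since conjugation in $\GL(n+1,\R)$ preserves eigenvalues, Theorem \ref{thm:distcomplexeigen} cannot be invoked on any conjugate of $T$, so a direct geometric construction is required. In this case $T$ necessarily admits a complex eigenvalue pair $te^{\pm i\theta}$; fix its two-dimensional real invariant subspace $V$. I would choose $g\in\GL(n+1,\R)$ so that the restriction of $S:=gTg^{-1}$ to $gV$ is an oblique (non-orthogonal) representative of its $\GL(2,\R)$-conjugacy class. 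In this oblique form the symmetric part of $(S|_{gV})^{-1}$ admits a positive eigenvalue, so one can pick $x\in gV\cap\mbb{S}^n$ and a small $t>0$ such that $a:=(tI-S)(x)$ is nonzero, satisfies $\|S^{-1}(a)\|<1$, and makes $x$ a fixed point of $\ol{S}_a$. A local linearization of $\ol{S}_a$ on the invariant circle $gV\cap\mbb{S}^n$ then identifies this fixed point as attracting or repelling, yielding non-distality.

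The main obstacle will be the residual case of part $(i)$. One must verify both that a conjugating $g$ can always be chosen so that the symmetric part of $(gTg^{-1})^{-1}$ is non-degenerate on $gV$ (so that the constraint $\|S^{-1}(a)\|<1$ is simultaneously compatible with the fixed-point equation), and that the derivative of $\ol{S}_a$ along $gV\cap\mbb{S}^n$ at the constructed fixed point has modulus different from one, so the fixed point is genuinely hyperbolic and orbits of nearby points converge to it, giving non-distality. In part $(ii)$ the analogous difficulty is a clean treatment of the boundary values $\theta\in\{\pi/2,2\pi/3,5\pi/6\}$ so that each ``collapse'' to a repeated real eigenvalue or to a complex eigenvalue with $\cos(k\theta)$ strictly in $(0,1)$ is justified.
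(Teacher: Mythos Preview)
Your approach to part $(ii)$ is essentially the paper's: both reduce to Theorem~\ref{thm:distcomplexeigen} by observing that when $T$ has a complex eigenvalue with $\cos\theta\le 0$, some $k\in\{2,3\}$ makes $T^k$ satisfy one of the clauses of Theorem~\ref{thm:distcomplexeigen}$(i)$. The paper compresses your subinterval analysis into the single remark that $\cos\theta<0$ forces $\cos(2\theta)>0$ or $\cos(3\theta)>0$, handling $\cos\theta=0$ separately via $T^2$ acquiring the repeated real eigenvalue $-t^2$; your explicit treatment of the boundary values $\theta\in\{\pi/2,\,2\pi/3,\,5\pi/6\}$ is more careful but not logically different.

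For part $(i)$ in the residual case $\cos\theta\le 0$, the paper takes a different and much shorter route: after restricting to the two-dimensional invariant subspace (so effectively $n=1$), it simply invokes Corollary~10 and Remark~11 of \cite{SY1} rather than constructing anything. Your plan---conjugating to an oblique representative so that the symmetric part of $(S|_{gV})^{-1}$ acquires a positive eigenvalue, manufacturing a fixed point from the equation $a=(tI-S)x$, and then checking hyperbolicity by linearization along the invariant circle---is a legitimate self-contained alternative, and your identification of the two obstacles (compatibility of $\|S^{-1}a\|<1$ with $t>0$, and the derivative having modulus $\ne 1$) is accurate. The tradeoff is that the paper's citation avoids all of this analytic work at the cost of depending on \cite{SY1}, whereas your route would make the corollary independent of that reference but requires a nontrivial computation the paper never carries out.
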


\begin{proof} For $T$ as above, either $T$ has two real eigenvalues or a complex eigenvalue of the form $t(\cos\theta+i\sin\theta)$, $t>0$. In the first case, both the assertions 
follows from Theorem \ref{thm:distcomplexeigen} (i) for $S=T$. In the second case suppose $0<\cos\theta<1$, then both the assertions follows from Theorem \ref{thm:distcomplexeigen} (i) for $S=T$. Now suppose 
$\cos\theta\leq 0$. As $T$ keeps a two dimensional space $V$ invariant, we can replace $T$ by $T\arrowvert_V$ and assume that $n=1$. Now $(i)$ follows from Corollary 10 and the Remark 11 of \cite{SY1}. For the second assertion, if $\cos\theta=0$, then $T^2$ has two identical real eigenvalues equal to $-1$, and if $\cos\theta<0$, then either $\cos(2\theta)>0$ or
$\cos(3\theta)>0$. In either of these cases, $(ii)$ follows for $S\in\{T,T^2,T^3\}$ from $(i)$ of Theorem \ref{thm:distcomplexeigen}. 
\end{proof}

Now we turn our attention to another aspect of this paper, \textit{i.e.}, expansivity of the homeomorphism $\T_a$.
As mentioned earlier, there is no expansive homeomorphism on $\mathbb{S}^n$ for $n=1,2$. But we are not aware of any such results on higher dimensional spheres. We show here that $\T_a$ is not expansive on $\mathbb{S}^n$, $n\geq 3$, for certain classes of $\T_a$.

\begin{theorem}\label{thm 15}
For any $T\in\GL(n+1,\R)$, $n\geq 1$, there exist uncountably many non-zero $a\in\R^{n+1}$ such that $\T_a$ is not an expansive homeomorphism on the $n$-dimensional sphere.
\end{theorem}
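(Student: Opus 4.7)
The strategy is to reduce the problem to the one-dimensional case. I will produce, for any $T \in \GL(n+1,\R)$, an uncountable family of $a$'s for which $\ol{T}_a$ preserves a $1$-dimensional invariant circle inside $\mbb{S}^n$, so that non-expansivity on the entire sphere will follow from the classical fact, already recalled in the introduction, that no homeomorphism of $\mbb{S}^1$ is expansive (\cite{JU1}, \cite{R1}).

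The first step is to choose a $2$-dimensional $T$-invariant subspace $W \subset \R^{n+1}$. For $n = 1$ one simply takes $W = \R^2$, while for $n \geq 2$ such a $W$ always exists by the real Jordan decomposition of $T$ (from a real irreducible quadratic factor of the characteristic polynomial, or from two real eigenvectors, or from an eigenvector together with a generalized eigenvector). Fix such a $W$. The set $\mathcal{U}_W := \{\, a \in W \setminus \{0\} : \|T^{-1}(a)\| < 1\,\}$ is a nonempty open subset of $W$ (it contains a punctured ball around the origin of $W$), and is therefore uncountable. For every $a \in \mathcal{U}_W$ the map $\ol{T}_a$ is a well-defined homeomorphism of $\mbb{S}^n$, and the circle $\Sigma := W \cap \mbb{S}^n$ is $\ol{T}_a$-invariant: if $x \in \Sigma$ then $a, T(x) \in W$ imply $\ol{T}_a(x) \in \Sigma$; conversely, if $\ol{T}_a(x) \in W$ then $T(x) = \|a + T(x)\|\,\ol{T}_a(x) - a \in W$, so $x \in W$. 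Hence $\ol{T}_a|_\Sigma$ is a self-bijection of the compact circle $\Sigma$, and therefore a self-homeomorphism of $\Sigma \cong \mbb{S}^1$.

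By the classical theorems cited above, $\ol{T}_a|_\Sigma$ is not expansive. Since expansivity on a compact metric space depends only on its uniform structure, this failure of expansivity persists with respect to the Euclidean metric inherited from $\R^{n+1}$. Consequently, for each $\delta > 0$ there exist distinct $x, y \in \Sigma \subseteq \mbb{S}^n$ with $\|\ol{T}_a^k(x) - \ol{T}_a^k(y)\| \leq \delta$ for every $k \in \Z$, which is exactly the negation of the expansivity of $\ol{T}_a$ on the full sphere $\mbb{S}^n$. Letting $a$ range over $\mathcal{U}_W$ produces uncountably many non-zero $a \in \R^{n+1}$ with the required property. I do not anticipate a substantial obstacle in this plan; the only point that deserves explicit verification is that $\ol{T}_a|_\Sigma$ is a genuine self-homeomorphism of $\Sigma$ rather than merely a continuous self-injection, which is settled by the argument above showing that both $\ol{T}_a$ and $\ol{T}_a^{-1}$ preserve $W$. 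The appeal to the $\mbb{S}^1$ result is entirely routine.
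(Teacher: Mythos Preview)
Your proposal is correct and follows essentially the same route as the paper: find a $2$-dimensional $T$-invariant subspace $W$, take $a$ in the (uncountable) punctured ball $\{a\in W\setminus\{0\}:\|T^{-1}(a)\|<1\}$, and deduce non-expansivity of $\ol T_a$ on $\mbb S^n$ from the non-existence of expansive homeomorphisms of the invariant circle $W\cap\mbb S^n\cong\mbb S^1$. Your write-up is in fact slightly more careful than the paper's, since you explicitly treat the case $n=1$, verify that $\ol T_a$ and its inverse both preserve $W$, and note that expansivity is metric-independent on a compact space.
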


\begin{proof}
Suppose $T\in\GL(n+1,\R)$, $n\geq 2$. Then at least one of the following holds:
\begin{enumerate}
\item[]$(i)$ $T$ has at least two real eigenvalues or no real eigenvalue,
\item[]$(ii)$ $T$ has only one real eigenvalue.
\end{enumerate}
In either case, $T$ keeps a $2$-dimensional subspace $W$ (say) invariant. 
We know if $||T^{-1}(a)||<1$ then $\T_a$ is a homeomorphism, and $||T^{-1}(a)||<1$ if \begin{equation} \label{eq8} 0<\|a\|<\frac{1}{\|T^{-1}\|}.\end{equation} But one can find uncountably many $a\in W$ such that (\ref{eq8}) holds. For these choices of $a$'s, $\T_a$ is a homeomorphism of $\mathbb{S}^n$ and its restriction to $W\cap\mathbb{S}^n$ can be seen as a homeomorphism of $\mathbb{S}^1$. As there is no expansive homeomorphism on $\mbb{S}^1$, it follows that given any $\delta>0$ there exist $x,y\in W\cap\mbb{S}^n$ with $x\neq y$, such that \begin{equation*}\|(\T_a^n(x), \T_a^n(y))\|<\delta\ \text{for all}\ n\in\Z. \end{equation*} This implies that for a given $\delta>0$ one can always find distinct $x,y\in\mbb{S}^n$ such that \begin{equation*}\|(\T_a^n(x), \T_a^n(y))\|<\delta\ \text{for all}\ n\in\Z. \end{equation*} 
Hence, $\T_a$ can not be an expansive homeomorphism of $\mbb{S}^n$. This completes the proof.
\end{proof}
\begin{remark}
Even if we take $a$ to be the zero vector and define $\T(x)=\frac{T(x)}{\|T(x)\|}$ for $T\in\GL(n+1,\R)$, then using the same proof as in the above theorem, it can be shown that $\T$ is not expansive on $\mbb{S}^n$.
\end{remark}
The following proposition shows that using the map $\T_a$ on $\mbb{S}^n$, it is easy to produce examples of non-distal and non-expansive maps on products of spheres, and in particular, on $n$-dimensional torus by seeing it as an $n$-fold product of $\mbb{S}^1$. 

\smallskip

\begin{proposition}\label{prop:prodist}
Let $X_1$ and $X_2$ be compact metric spaces and $T_1$, $T_2$ be homeomorphisms of $X_1$ and $X_2$ respectively. Let $T\colon X_1\times X_2\to X_1\times X_2$ be the cartesian product of $T_1$ and $T_2$, i.e.,  $$T(x_1,x_2)=(T_1x_1,T_2x_2),$$ for $x_1\in X_1$ and $x_2\in X_2$. Then
\begin{enumerate}
\item[]$(i)$ $T$ is distal if and only if both $T_1$ and $T_2$ are distal.
\item[]$(ii)$ $T$ is expansive if and only if both $T_1$ and $T_2$ are expansive.
\end{enumerate}
\begin{proof}
$(i)$ Suppose $T_1$ and $T_2$ both are distal but $T$ is not distal. Then there exist $(x_1,x_2),\ (y_1,y_2)\in\ X_1\times X_2$ with $x_1\neq y_1$ (say) such that $$\bigl((x,y),\ (x,y)\bigr)\in \overline{\Bigl\{\Bigl(T^n(x_1,x_2),\ T^n(y_1,y_2)\Bigr)\Bigr\}}$$ for some $x\in X_1$ and $y\in X_2$. Since $T^n(x_1,x_2)=(T_1^nx_1,T_2^nx_2)$, it follows that $(x,x)\in \overline{\{(T_1^nx_1,T_1^ny_1)\}}$ contradicting the fact that $T_1$ is distal.

\smallskip 

Conversely, suppose that $T$ is distal but $T_1$ is not distal. Then there exist $x_1,y_1$ in $X_1$ with $x_1\neq y_1$ such that $(x,x)\in \overline{\{(T_1^nx_1,T_1^ny_1)\}}$ for some $x\in X_1$. Choose some $x_2$ in $X_2$ and let $y\in \overline{\{T_2^nx_2\}}$. Then it follows that $$\bigl((x,y),(x,y)\bigr)\in\ \overline{\Bigl\{\Bigl((T_1^nx_1,T_2^nx_2),\ (T_1^ny_1,T_2^nx_2)\Bigr)\Bigr\}}=\overline{\Bigl\{\Bigl(T^n(x_1,x_2),\ T^n(y_1,x_2)\Bigr)\Bigr\}}$$ which contradicts the distality of $T$.  

\smallskip

$(ii)$ We use the same notation $d$ to denote the metrics on $X_1$, $X_2$ and the product metric on $X_1\times X_2$. Now suppose $T_1$ and $T_2$ both are expansive with expansive constant $\delta_1$ and $\delta_2$ respectively, but, $T$ is not expansive. Then for any $\delta>0$, there exist $x_1,y_1\in X_1$ and $x_2,y_2\in X_2$ with at least $x_1\neq y_1$ such that 
$$d\Bigl(T^n(x_1,x_2),\ T^n(y_1,x_2)\Bigr)=d\Bigl((T_1^nx_1,T_2^nx_2),\ (T_1^ny_1,T_2^nx_2)\Bigr)<\delta$$ for all $n\in\Z$. But this contradicts the expansiveness of $T_1$ if we choose $\delta$ to be sufficiently small.
\smallskip

Conversely, suppose that $T$ is expansive with expansive constant $\varepsilon>0$, but, $T_1$ is not expansive. Then for any $\delta>0$, there is a pair $(x_1,y_1)$ of distinct points in $X_1$ such that $$d(T_1^nx_1,\ T_1^ny_1)<\delta$$ for all $n$ in $\Z$. But this implies $$d\Bigl(T^n(x_1,y),\ T^n(y_1,y)\Bigr)=d\Bigl((T_1^nx_1,T_2^ny),\ (T_1^ny_1,T_2^ny)\Bigr)<\delta$$ for all $n$ in $\Z$ which contradicts the expansivity of $T$ if we choose $\delta$ to be sufficiently small.
\end{proof}
\end{proposition}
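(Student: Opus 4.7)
The plan is to establish both (i) and (ii) by handling each of the four implications separately, all via a straightforward reduction between the product system and its factor systems. Throughout I will equip $X_1\times X_2$ with the max-metric $d((x_1,x_2),(y_1,y_2))=\max\{d(x_1,y_1),d(x_2,y_2)\}$, which is topologically equivalent to any standard product metric and renders the forward estimates cleanest. I will also freely replace nets by sequences, since both factors are metric and hence first-countable.

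For part (i), the implication ``$T_1$ and $T_2$ distal $\Rightarrow T$ distal'' I handle by contraposition: if the closure of the double orbit of distinct $(x_1,x_2),(y_1,y_2)$ meets the diagonal at $((u,v),(u,v))$ along a sequence $n_k$, then without loss of generality $x_1\neq y_1$, and the projection onto the first factor gives $T_1^{n_k}(x_1),T_1^{n_k}(y_1)\to u$, contradicting distality of $T_1$. For the reverse implication, suppose $T_1$ is not distal, witnessed by distinct $x_1,y_1\in X_1$ and a sequence $n_k$ with $T_1^{n_k}(x_1),T_1^{n_k}(y_1)\to u$; fix any $x_2\in X_2$. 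The key step is now to invoke the compactness of $X_2$ to pass to a further subsequence along which $T_2^{n_k}(x_2)\to v$ for some $v\in X_2$. Then $T^{n_k}(x_1,x_2)$ and $T^{n_k}(y_1,x_2)$ both converge to $(u,v)$, producing the desired non-distality of $T$.

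For part (ii), the direction ``$T_1$ and $T_2$ expansive $\Rightarrow T$ expansive'' uses $\delta:=\min\{\delta_1,\delta_2\}$ as the candidate expansive constant: for any distinct pair in $X_1\times X_2$, at least one coordinate differs, and applying expansivity of the corresponding factor at that coordinate yields an iterate on which the max-metric exceeds $\delta$. Conversely, if $T$ is expansive with constant $\varepsilon$, I fix any $x_2\in X_2$ and apply expansivity of $T$ to the pair $(x_1,x_2),(y_1,x_2)$ for each distinct $x_1,y_1\in X_1$; since the second coordinates remain equal under every iterate of $T$, the separation by $\varepsilon$ must be realized in the first coordinate, so $\varepsilon$ serves as an expansive constant for $T_1$, and symmetrically for $T_2$.

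The only step requiring care is the compactness extraction in the converse of (i); without the hypothesis that $X_2$ be compact (or at least sequentially compact), one could not guarantee the needed convergence of $T_2^{n_k}(x_2)$, and the reduction would fail. All other directions amount to projecting or lifting between coordinate systems and involve no real subtlety beyond unpacking the definitions of distality and expansivity.
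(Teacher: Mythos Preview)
Your proposal is correct and follows essentially the same approach as the paper: each implication in (i) and (ii) is handled by projecting to or lifting from a factor, with compactness of $X_2$ used to extract a convergent subsequence in the converse of (i). Your write-up is in fact slightly more careful than the paper's in making that compactness step explicit and in fixing the max-metric up front; the paper phrases the forward direction of (ii) by contraposition rather than exhibiting $\min\{\delta_1,\delta_2\}$ directly, but this is a purely cosmetic difference.
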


Let $T_{k}\in \GL(i_k+1,\R)$ for $k=1,2,\dots,n$, and let $N=\sum\limits_{k=1}^n\ (i_k+1)$. We write
\begin{equation}\label{DT}
    \text{diag}\ [T_{1},\dots,T_{n}]=
    \begin{pmatrix}
    T_1 & 0 & \cdots & 0\\
    0 & T_2 & \cdots & 0\\
    0 & 0 & \dots & 0\\
    0 & 0 & \dots & T_n
    \end{pmatrix}
\end{equation}
for the $N\times N$ block diagonal matrix with blocks being $T_{1},\dots,T_{n}$. For any vector $\uu=(u_1,\dots, u_n)\in\R^N$, where $u_k\in \R^{i_k+1}$, let $\overline{T}_u$ be the map defined on $\mbb{S}^{i_1}\times\dots\times\mbb{S}^{i_n}$ as the product of $(\overline{T}_{1})_{u_1},\dots, (\overline{T}_{n})_{u_n}$, \textit{i.e.} $$\overline{T}_u=(\overline{T}_{1})_{u_1}\times\ \dots\ \times (\overline{T}_{n})_{u_n};$$ that is
\begin{equation}\label{aff}
    \T_{u}(\vv)=\Big((\T_1)_{u_1}(v_1),\dots, (\T_n)_{u_n}(v_n) \Big),
\end{equation}

\noindent for any $\vv=(v_1,\dots,v_n)\in\R^N$. Then it is easy to see that $\T_{u}$ is a homeomorphism if and only if $\|T_i^{-1}(u_i)\|<1$ for each $i=1,\dots,n$. The following results can be seen as easy consequences of Theorem \ref{thm:distcomplexeigen} and Proposition \ref{prop:prodist}.

\begin{proposition}
Let $\textnormal{diag}\ [T_1,\dots,T_n]$ be a block-wise diagonal matrix as in \eqref{DT} and let $u=(u_1,\dots,u_n)\in\R^N$ be such that $\|T_i^{-1}(u_i)\|< 1$ for each $i=1, \dots, n$. Then $\T_{u}$ is distal (resp.\ expansive) if and only if $\T_{u_i}$ is distal (resp.\ expansive) for all $i=1,\dots,n$.
\end{proposition}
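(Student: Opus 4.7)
The strategy is to derive the proposition as a straightforward consequence of Proposition \ref{prop:prodist} by induction on the number of factors $n$. First I would note that the hypothesis $\|T_i^{-1}(u_i)\|<1$ for each $i$ ensures, by the criterion recalled just after \eqref{tabar}, that every factor map $(\T_i)_{u_i}$ is a homeomorphism of $\mbb{S}^{i_k}$. Hence $\T_u$, defined in \eqref{aff}, is a homeomorphism of the compact metric space $X:=\mbb{S}^{i_1}\times\cdots\times\mbb{S}^{i_n}$ equipped with the standard product metric (for instance $d(\vv,\ww)=\max_k d_k(v_k,w_k)$, where $d_k$ is the Euclidean metric on $\mbb{S}^{i_k}$). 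This places us squarely in the setting of Proposition \ref{prop:prodist}.

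The base case $n=2$ is exactly Proposition \ref{prop:prodist}. For the inductive step, assuming the statement for $n-1$ factors, I would group the factors as $X=X_1\times X_2$ with $X_1=\mbb{S}^{i_1}$ and $X_2=\mbb{S}^{i_2}\times\cdots\times\mbb{S}^{i_n}$, and write
\begin{equation*}
\T_u = (\T_1)_{u_1}\times \T_{u'}, \qquad u'=(u_2,\dots,u_n),
\end{equation*}
where $\T_{u'}$ is the product homeomorphism on $X_2$ defined analogously to \eqref{aff}. Proposition \ref{prop:prodist} applied to $X_1\times X_2$ gives that $\T_u$ is distal (resp.\ expansive) if and only if $(\T_1)_{u_1}$ and $\T_{u'}$ are both distal (resp.\ expansive). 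Invoking the inductive hypothesis on $\T_{u'}$ then supplies the missing equivalence between distality (resp.\ expansivity) of $\T_{u'}$ and that of each $(\T_i)_{u_i}$ for $i=2,\dots,n$, and the proposition follows.

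The only subtlety worth a brief remark is a choice of metric on the iterated product $X_2$ used in the inductive step: Proposition \ref{prop:prodist} is phrased for an abstract product metric, but any two metrics compatible with the product topology on a finite product of compact metric spaces are uniformly equivalent, and both distality and expansivity are invariant under passage to uniformly equivalent metrics (for expansivity this changes the expansive constant but not its existence). I would record this observation before setting up the induction, after which the entire argument is routine and contains no serious obstacle.
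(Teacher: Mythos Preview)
Your proposal is correct and matches the paper's own treatment: the paper does not give a separate proof but simply records this proposition as an easy consequence of Proposition~\ref{prop:prodist}, which is exactly the inductive reduction you spell out. Your additional remark on uniform equivalence of product metrics is a reasonable caution but not strictly needed, since Proposition~\ref{prop:prodist} is stated for an arbitrary product of compact metric spaces and can be applied iteratively as you do.
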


\begin{theorem}
Suppose $T=\ \text{diag}\ [T_{1},\dots,T_{n}]\in\GL\left(N,\R\right)$ be such that it has two real eigenvalues or a complex eigenvalue of the form $t(\cos\theta+i \sin\theta),t>0$ such that $0<\cos\theta<1$ or $T_{i_k}$ is proximal with $\det(T_{i_k})>0$ for some $k=1,\dots,n$. Then there exists $u=(u_1,\dots, u_n)\in \R^N$ with $u_k\in\R^{i_k+1}$ for $k=1,2,\dots,n$ such that $\overline{T}_u$ on $\mbb{S}^{i_1}\times\dots\times\mbb{S}^{i_n}$ is not distal.
\end{theorem}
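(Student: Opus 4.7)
The strategy is a two-step reduction. First, by an inductive extension of Proposition \ref{prop:prodist}, the product map $\T_u=(\T_1)_{u_1}\times\cdots\times(\T_n)_{u_n}$ on $\mbb{S}^{i_1}\times\cdots\times\mbb{S}^{i_n}$ is distal if and only if each factor $(\T_k)_{u_k}$ is distal on $\mbb{S}^{i_k}$. So it suffices to exhibit a single index $k$ and a vector $u_k\in\R^{i_k+1}$ with $\|T_k^{-1}(u_k)\|<1$ such that $(\T_k)_{u_k}$ fails to be distal; for $j\neq k$ we then pick any $u_j$ (for instance $u_j=0$) with $\|T_j^{-1}(u_j)\|<1$, so that $u=(u_1,\dots,u_n)$ produces a well-defined homeomorphism $\T_u$ of the product.

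Second, we show the eigenvalue hypothesis on $T$ descends to some block $T_k$. Because $T=\text{diag}[T_1,\dots,T_n]$ is block diagonal, the multiset of eigenvalues of $T$ is the disjoint union of the eigenvalue multisets of the blocks, and complex eigenvalues of a real matrix occur in conjugate pairs inside a single invariant block. Consequently, if $T$ has a complex eigenvalue $t(\cos\theta+i\sin\theta)$ with $0<\cos\theta<1$, that eigenvalue is already an eigenvalue of some $T_k$; if $T$ has two real eigenvalues, either both lie in the same block $T_k$ (in which case Theorem \ref{thm:distcomplexeigen}(i) applies directly), or a short case analysis on how the real eigenvalues and complex conjugate pairs are distributed among the blocks (in the spirit of the proof of Corollary \ref{cor15}) locates a block whose eigenvalues satisfy hypothesis (i) of Theorem \ref{thm:distcomplexeigen}. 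Finally, if the proximality hypothesis holds, the block $T_k$ is singled out by assumption and meets the hypothesis of Theorem \ref{thm:distcomplexeigen}(ii).

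With such a $k$ fixed, apply the appropriate part of Theorem \ref{thm:distcomplexeigen} to the block $T_k$ to obtain $u_k$ with $0<\|T_k^{-1}(u_k)\|<1$ such that $(\T_k)_{u_k}$ admits a fixed point or a periodic point of order two on $\mbb{S}^{i_k}$, and hence is not distal. Combined with Step 1, this gives the desired $u\in\R^N$ for which $\T_u$ is not distal on the product of spheres.

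The main obstacle is the bookkeeping in Step 2, which amounts to an elementary case analysis on how the real eigenvalues and complex conjugate pairs of $T$ partition among the blocks; once this is settled, the remainder is a direct application of the machinery developed earlier, namely Theorem \ref{thm:distcomplexeigen} and Proposition \ref{prop:prodist}.
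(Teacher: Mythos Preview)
Your overall strategy matches the paper's exactly: the paper offers no proof beyond declaring the result an ``easy consequence of Theorem \ref{thm:distcomplexeigen} and Proposition \ref{prop:prodist}'', and your Step 1 (reduce distality of the product to distality of a single factor) together with the final application of Theorem \ref{thm:distcomplexeigen} is precisely that reduction.

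There is, however, a genuine gap in Step 2. When the two real eigenvalues of $T$ lie in different blocks you assert that ``a short case analysis in the spirit of Corollary \ref{cor15}'' locates a block satisfying hypothesis (i) of Theorem \ref{thm:distcomplexeigen}. But the device in Corollary \ref{cor15} is to pass to $T^2$ or $T^3$, which is unavailable here, and in fact no such block need exist. Take $T_1,T_2\in\GL(3,\R)$, each with a single negative real eigenvalue of large modulus and a complex-conjugate pair $e^{\pm 2\pi i/3}$; then $T=\mathrm{diag}[T_1,T_2]$ has two real eigenvalues, yet for each block the only real eigenvalue has multiplicity one, the complex eigenvalues have $\cos\theta=-\tfrac12\notin(0,1)$, and the block is proximal with \emph{negative} determinant. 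Thus neither part of Theorem \ref{thm:distcomplexeigen} applies to any block, and your reduction stalls. The paper's informal claim seems to presuppose the eigenvalue hypothesis is read block-wise (some $T_k$ satisfies (i) or (ii) of Theorem \ref{thm:distcomplexeigen}); under that reading your argument is complete, but as the statement is literally written your Step 2 is not justified.
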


\begin{theorem}
For $T=\ \text{diag}\ [T_{i_1},\dots,T_{i_n}]\in\GL\left(N,\R\right)$, there exist uncountably many $u=(u_1,\dots u_n)\in\R^N$ with $u_k\in\R^{i_k+1}$ for $k=1,2,\dots,n$ such that $\overline{T}_u$ on $\mbb{S}^{i_1}\times\dots\times\mbb{S}^{i_n}$ is not expansive.
\end{theorem}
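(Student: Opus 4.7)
The plan is to reduce the statement to a single-factor problem by invoking Proposition \ref{prop:prodist}(ii), which says that $\T_u = (\T_1)_{u_1}\times\dots\times(\T_n)_{u_n}$ is expansive if and only if every factor $(\T_k)_{u_k}$ is expansive. Equivalently, it suffices to force non-expansivity in \emph{one} factor; the other factors only need to yield bona fide homeomorphisms. Since Theorem \ref{thm 15} already supplies an uncountable family of non-expansive $\T_a$'s for any single $T_k\in \GL(i_k+1,\R)$, the result will follow by taking a product.

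More concretely, I would proceed as follows. First, pick an arbitrary index, say $k=1$, and apply Theorem \ref{thm 15} to $T_1\in \GL(i_1+1,\R)$ to obtain an uncountable set $\mathcal{A}\subset \R^{i_1+1}$ such that, for every $u_1\in\mathcal{A}$, one has $\|T_1^{-1}(u_1)\|<1$ and $(\T_1)_{u_1}$ is a non-expansive homeomorphism of $\mbb{S}^{i_1}$. Next, for each remaining index $k=2,\dots,n$, fix once and for all a non-zero vector $u_k\in\R^{i_k+1}$ satisfying $\|u_k\|<1/\|T_k^{-1}\|$, which guarantees $\|T_k^{-1}(u_k)\|<1$ and hence that $(\T_k)_{u_k}$ is a homeomorphism of $\mbb{S}^{i_k}$. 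With these choices, for every $u_1\in\mathcal{A}$ the tuple $u=(u_1,u_2,\dots,u_n)\in\R^N$ satisfies $\|T_k^{-1}(u_k)\|<1$ for all $k$, so $\T_u$ is a homeomorphism of $\mbb{S}^{i_1}\times\dots\times\mbb{S}^{i_n}$. Since the first factor $(\T_1)_{u_1}$ is not expansive, Proposition \ref{prop:prodist}(ii) immediately yields that $\T_u$ is not expansive. Finally, as $u_1$ varies in the uncountable set $\mathcal{A}$, one produces uncountably many distinct $u\in\R^N$ with the desired property.

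There is really no serious obstacle here: the whole argument is a direct combination of Theorem \ref{thm 15} (which provides the non-expansive factor) with Proposition \ref{prop:prodist}(ii) (which lifts non-expansivity from a single factor to the product). The only technical point worth checking is that the auxiliary choices $u_k$ for $k\geq 2$ can indeed be made to satisfy $\|T_k^{-1}(u_k)\|<1$; this is automatic because the open ball of radius $1/\|T_k^{-1}\|$ around the origin in $\R^{i_k+1}$ contains plenty of non-zero vectors. Note also that one could enlarge the family of $u$'s even further by letting each $u_k$ ($k\geq 2$) vary in its own open ball, but this is not needed since uncountability is already achieved by varying $u_1$ alone.
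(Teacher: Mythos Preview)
Your argument is correct and is precisely the approach the paper intends: the paper states (without further proof) that this theorem is an easy consequence of the single-sphere non-expansivity result (Theorem~\ref{thm 15}) together with Proposition~\ref{prop:prodist}(ii), and your proposal simply spells out that reduction. The only cosmetic remark is that ``$k=1$'' should really be any index with $i_k\geq 1$ so that Theorem~\ref{thm 15} applies, but this is implicit in the paper's setup as well.
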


%%%%%%%%%%%%%%%%%%%%%%%%%%%%%%%%%%%%%%%%%%%%%%%%%%%%%%%%%%%%%%%%%%%%%%%%%%%%
%%%%%%%%%%%%%%%%%%%%%%%%%%%%%%%%%%%%%%%%%%%%%%%%%%%%%%%%%%%%%%%%%%%%%%%%%%%%%%
%%%%%%%%%%%%%%%%%%%%%%%%%%%%%%%%%%%%%%%%%%%%%%%%%%%%%%%%%%%%%%%%%%%%%%%%%%%%%%%%

\medskip

\textbf{Acknowledgments} This work started at the International Centre for Theoretical Sciences (ICTS) when the authors went there for the programme Probabilistic Methods in Negative Curvature \url{ICTS/pmnc2019} and the authors are grateful for the hospitality of the ICTS and the nice environment they provide during any programme. The authors are also thankful to ICTS for organizing the second programme with the same theme Probabilistic Methods in Negative Curvature \url{ICTS/pmnc2021/3} which was also helpful to the authors. 
\smallskip

G. Faraco wish to thank Ursula Hamenst\"adt for invaluable support during the last six months of his period at Bonn. One of the authors A.\ K.\ Yadav would like to thank R.\ Shah for introducing this problem and having rigorous discussions during his Ph.D.\ at JNU, New Delhi and later, and also help in proving the Proposition \ref{prop2}. A.\ K.\ Yadav would also thank National Board for Higher Mathematics, India for Post-doctoral fellowship, and T.\ Das for local hospitality in the Department of Mathematics, University of Delhi during this fellowship. 
\smallskip

The authors are very much thankful to the referee for many insightful suggestions which led to a significant improvement in the presentation of the paper.

\end{document}